\newtheorem{thm}{Theorem}[section]
\newtheorem{lemma}[thm]{Lemma}
\newtheorem{example}[thm]{Example}
\newtheorem{proposition}[thm]{Proposition}
\newtheorem{definition}[thm]{Definition}
\newtheorem{rem}[thm]{Remark}
\newtheorem{note}[thm]{Note}
\numberwithin{equation}{section}
\newcommand{\filtration}{\left(\mathcal{F}_t\right)_{t\in[0,T]}}
\newcommand{\F}{\mathcal{F}}
\newcommand{\filt}{\mathbb{F}}
\newcommand{\essinf}{\mathop{\mbox{essinf}}}
\newcommand{\esssup}{\mathop{\mbox{esssup}}}
\newcommand {\R}{\mathbb {R}}
\newcommand {\N}{\mathbb {N}}
\newcommand {\Q}{\mathbb {Q}}
\newcommand{\eps}{{\varepsilon}}
\newcommand{\wt}{\widetilde}
\newcommand{\beao}{\begin{eqnarray*}}
\newcommand{\eeao}{\end{eqnarray*}\noindent}
\newcommand{\beam}{\begin{eqnarray}}
\newcommand{\eeam}{\end{eqnarray}\noindent}
\date{}
\title[Nash equilibria for game contingent claims]{Nash equilibria for game contingent claims with utility-based hedging}
\author[Klebert Kentia]{Klebert Kentia}
\author[Christoph K\"uhn]{Christoph K\"uhn}
\address[K. Kentia, C. K\"uhn]{Institut f\"ur Mathematik, Goethe-Universit\"at Frankfurt, D-60054 Frankfurt a.M., Germany}
\email{kentia\,@\,math.uni-frankfurt.de, ckuehn\,@\,math.uni-frankfurt.de}
\begin{document}
\keywords{Game contingent claims, Incomplete markets, Exponential utility indifference valuation, Non-zero-sum Dynkin games, Nash equilibria, Optimal stopping under nonlinear expectation}
\subjclass[2010]{91A10, 91A15, 60G40, 91B16, 91G10, 91G20}

\begin{abstract}
Game contingent claims (GCCs) generalize American contingent claims in allowing the writer to recall the option as long as it is not exercised, at the price of paying some penalty.
In incomplete markets, an appealing approach is to analyze GCCs like their European and American counterparts by solving option holder's and writer's optimal investment problems in the underlying securities.
By this, partial hedging opportunities are taken into account. We extend results in the literature by solving the stochastic game corresponding to GCCs with both continuous time stopping 
and trading. Namely, we construct Nash equilibria by rewriting the game 
as a non-zero-sum stopping game in which players compare payoffs in terms of their exponential utility indifference values. 
As a by-product, we also obtain an existence result for the optimal exercise time of an American claim under utility indifference 
valuation by relating it to the corresponding nonlinear Snell envelope.
\end{abstract}
\maketitle

\section{Introduction}
A {\em game contingent claim (GCC)} as introduced  in Kifer~\cite{Kifer2000}, is a contract between a buyer/holder and a seller/writer which can be exercised by the buyer and recalled by the seller 
at any time up to a maturity date  when the contract is terminated anyway. 
The contract can be modeled by two stochastic processes $(X_t)_{t\in[0,T]}$ and $(Y_t)_{t\in[0,T]}$, where $T\in\R_+$ is the maturity:
If the buyer chooses a $[0,T]$-valued stopping time~$\tau$ and the seller a $[0,T]$-valued stopping time $\sigma$, the payoff to the buyer at time $\min\{\tau,\sigma\}$ is given by  
\begin{equation*}
	X_\tau\mathds{1}_{\{\tau\le \sigma\}}+Y_\sigma\mathds{1}_{\{\sigma< \tau\}}.
\end{equation*}
A standing assumption is $X\le Y$, meaning that stopping is penalized.
In the last two decades, such contracts have been intensively studied in the literature. We refer to Kifer~\cite{kifer.suryev} for a recent review.
The starting point was the article by Kifer~\cite{Kifer2000} who showed that in a complete market, the option has a unique no-arbitrage price and can be perfectly hedged by the writer. 
Here, a hedge consists of a dynamic trading strategy in the underlyings and a recalling time. In incomplete markets, however, perfect hedges may fail to exist and there are essentially three 
different approaches generalizing \cite{Kifer2000}. 
First there is super-hedging (see, e.g., Section~13.2.1 of \cite{KallsenKuehn2005}), which has the drawback that it is often too expensive and in many situations only leads to trivial bounds for 
arbitrage-free prices.
Another approach consists in considering GCCs as liquid securities which can be dynamically traded simultaneously with the underlyings, with the only difference that their prices are not 
exogenously given. The exercise and recall features of a GCC are implicitly modeled  by short selling and long buying constraints, respectively. It turns out that both a no-arbitrage 
criterion and a utility maximization criterion for a representative investor lead to the dynamic value of a zero-sum stopping game as a price process for a GCC.
In the first case, expectations are taken under an arbitrary martingale measure and in the second under the  martingale measure induced by the marginal utility at the optimal terminal wealth when 
only trading in the underlyings is possible (see \cite{KallsenKuehn2005} and \cite{KallsenKuehn2004}, respectively). 

In the present article, we follow the third approach which is in the spirit of utility-based hedging of European claims as introduced by Hodges and Neuberger~\cite{HodgesNeuberger89}, see the survey article 
Becherer~\cite{Becherer.Survey2010} and the references therein.
For an analysis of the general indifference valuation problem for American contingent claims, we refer to Leung and Sircar~\cite{leung.sircar.2009} for a backward utility-based approach as used in the present paper, and to Leung et al.~\cite{LeungSircarZhariphopoulou2012} who apply a  forward performance criterion approach to the utility-based hedging problem.
%
%
Here, we consider as in \cite{Kuehn2004} a game between the buyer and the seller of a GCC who both aim to maximize their expected utility from terminal wealth by exercising/recalling the GCC and trading, in addition, arbitrary amounts in the underlyings.
This means that as in \cite{Kifer2000}, but in contrast to \cite{KallsenKuehn2004} 
and \cite{KallsenKuehn2005}, the GCC is no liquid asset. 
Applied to the special case of a complete market, the game leads to the same stopping times as in \cite{Kifer2000}, i.e., the buyer tries to maximize, whereas the seller 
tries to minimize the expected option's payoff under the unique equivalent martingale measure. Especially, this means that the equilibrium stopping times do not depend 
on agents' preferences or endowments, and equilibrium values are unique. In \cite{Kuehn2004}, it is furthermore  shown that Nash equilibria exist
also in a general incomplete market if the utility functions are exponential, i.e.,
the absolute risk aversion does not depend on wealth. However, while trading in the underlying is continuous, stopping the contract is only discrete in \cite{Kuehn2004}. This allows to construct equilibrium stopping regions 
by a backward recursion in time.
Later on, in the seminal paper of Hamad\`ene and Zhang~\cite{HamadeneZhang2010}, equilibria of non-zero-sum continuous time stopping games 
have been derived under minimal conditions on the payoff processes. This can be applied to the above game 
for arbitrary utility functions of the holder and the writer, but only in the special case that they do not have access to a financial market, i.e., (partial) hedging opportunities 
are not taken into account (see Section~4 of \cite{HamadeneZhang2010}). Since the utility functions are typically nonlinear, it is a non-zero-sum game and equilibria are in general not unique. 

An observation in \cite{Kuehn2004} is that Nash equilibrium points may fail to exist for players' utility functions other than exponential. Namely, by the non-constancy of the 
absolute risk aversion, equilibria cannot be constructed backwards in time since the past trading gains in the underlyings matter (cf.\  Remarks~2.4 and~2.5 therein for a counterexample and a detailed explanation, respectively).
This also shows that the result of \cite{HamadeneZhang2010}, which is {\em not} restricted to 
exponential utility, cannot be applied to the problem if there are (partial) hedging opportunities.
In the current article, we close the above mentioned gap and extend \cite{Kuehn2004} for exponential utility functions to continuous time stopping
(see \cref{thm:NEPforGamePb}). For this, we combine the techniques of derivation of the above mentioned result of \cite{HamadeneZhang2010}, who consider games under linear expectations, with new results on 
optimal stopping when payoffs are valued by utility indifference that is in general nonlinear in the payoff.
We note that the results of \cite{HamadeneZhang2010} have recently been partly extended by Grigorova and Quenez~\cite{GrigorovaQuenez2017} to a non-zero-sum game with players evaluating their 
payoffs in terms of (nonlinear) $g$-expectations for Lipschitz generator functions $g$. However, although in a continuous time setup, the latter article restricts the equilibrium analysis of 
the game to discrete time stopping strategies. Furthermore since the dynamic exponential indifference valuation typically is a $g$-expectation with $g$ of quadratic 
growth (cf.\ e.g.\ Theorem 13 in Mania and Schweizer~\cite{ManiaSchweizer05}), the results of \cite{GrigorovaQuenez2017} are not directly applicable to our non-zero-sum game.

Given a recalling time of the option writer, for an exponential utility function, the utility maximization and timely exercise problem of the option holder can be reduced to an optimal stopping problem in 
which the random payoff of the American claim is not evaluated by its (linear) expectation, but by its (buyer's) indifference price that is not homogeneous.
This means that one has to solve $\sup_{\tau}\, \pi_0(L_\tau)$, where $L$ is the payoff process, $\pi_0$ is the initial indifference valuation, and  $\tau$ runs through all $[0,T]$-valued 
stopping times the holder can choose. Let $\pi_t$ be the conditional indifference valuation at time $t$. By the time-consistency of the indifference valuation operator $\pi=(\pi_t)_{t\in[0,T]}$, 
it seems to be self-evident that there should be a smallest ``$\pi$-supermartingale'' that dominates $L$ and, if $L$ has no negative jumps, the optimal stopping time is given by the first 
time this supermartingale hits $L$. However, in continuous time, it seems very difficult to provide rigorous proofs to this conjecture. 
Our result on optimal stopping under indifference valuation is \cref{thm:NonlinSnellNEW}, which characterizes the optimal exercise time of an American claim as the first time the payoff process hits the 
corresponding nonlinear Snell envelope associated to the American exponential utility indifference value. To the best of our knowledge this is a new result, and it is also of independent interest. 
For its proof, we extend properties of the dynamic {\em European} indifference valuation derived by Mania and Schweizer~\cite{ManiaSchweizer05} to the American one
and establish a reverse continuity result (\cref{pro:ConvOfExpIMpliesConvInProb}) for the indifference valuation. 
How the result is related to the literature on optimal stopping under nonlinear expectations is described in subsection~\ref{14.6.2018.1}.
%
%

In incomplete markets, random endowments are a key motivation to trade derivatives.
For European claims, Anthropelos and \v{Z}itkovi\'c~\cite{anthropelos.zitkovic} give a complete characterization of those claims for which there exists a price at which two agents, with given risk-aversions and endowments, are willing to trade the claim. 
It is shown that there exists a unique, up to replicable payoffs, Pareto-optimal allocation (see Remark~3.17 in \cite{anthropelos.zitkovic}). The special feature of exponential utility
maximizers is that by the constant absolute risk-aversion, the risky-sharing
does not depend on the distribution of the aggregate endowment between the agents before trading. Consequently, the above mentioned Pareto-optimal allocation can be reached by a ``mutually agreeable''  trade for {\em any} initial distribution of 
the aggregate endowment between the agents. In a similar vein, agents' endowments also influence the optimal stopping times of a given GCC, and thus the option's payoff.
In section~\ref{14.6.2018.2}, we illustrate this impact and discuss the relation to \cite{anthropelos.zitkovic}, see Example~\ref{9.5.2018.1} and Remark~\ref{5.9.2018.1}, respectively.
Example~\ref{5.9.2018.2} provides some economic intuition behind the non-uniqueness of equilibria that is caused by a simultaneous incentive of both agents to stop the contract. By contrast, in complete markets, both players evaluate future payoffs by their conditional expectation under the unique martingale measure, which implies
by $X\le Y$, that at no time, they both want to stop the contract.\\

The paper is organized as follows. In \cref{sect:ProblemFormulationUtilMax}, we specify the mathematical framework and state the main result, \cref{thm:NEPforGamePb}, about the 
existence of Nash equilibria. By relating the problem to utility-indifference valuation, \cref{2.2.2017.1} prepares the proof of \cref{thm:NEPforGamePb} which is then completed 
in \cref{sect:proofMainResult}. In addition, \cref{2.2.2017.1} states the key \cref{thm:NonlinSnellNEW} on optimal stopping under utility indifference valuation, which is accompanied by a literature review on nonlinear optimal stopping 
(subsection~\ref{14.6.2018.1}).
Examples can be found in section~\ref{14.6.2018.2}.
Finally, \cref{app:AppendixB} gathers proofs of results that are omitted throughout the main text.

\section{Problem formulation and main result}\label{sect:ProblemFormulationUtilMax}
We start with a filtered probability space $(\Omega,\F,\filt=\filtration,P)$ with time horizon $T\in\R_+$ and a filtration $\filt$ satisfying the usual conditions of right-continuity and completeness.
For a $[0,T]$-valued stopping time $\tau$, we denote by $\mathcal{T}_\tau$ the family of stopping times $\sigma$ such that $\tau\le \sigma\le T$ $P$-almost-surely. 
We denote by $Z^Q$ the density process of an equivalent measure $Q$ with respect to $P$ and by $E^Q_\tau[\cdot]$ the conditional expectation under $Q$ given the information $\F_\tau$ 
up to a stopping time $\tau\in\mathcal{T}_0$. In particular for $Q=P$, we simply write $E_\tau[\cdot]$. If not stated otherwise, inequalities between random variables 
are understood in the $P$-almost-surely sense.
For a generic $\sigma$-algebra $\mathcal{A}$ on $\Omega$ and a probability measure $Q$ on $(\Omega,\mathcal{A})$, we denote by $L^\infty(\mathcal{A},Q)$ the space of $\mathcal{A}$-measurable random variables that are $Q$-essentially bounded and by $\mathcal{S}^\infty(Q)$ the space of 
$\filt$-adapted processes $Y$ with  c\`adl\`ag paths satisfying 
	$\lVert Y\rVert_{\mathcal{S}^\infty(Q)}:= \lVert \sup_{t\in[0,T]}\lvert Y_t\rvert\rVert_{L^\infty(\F_T, Q)}<\infty.$
In particular, we simply write $L^\infty(\mathcal{A})$ for $L^\infty(\mathcal{A},P)$, $L^\infty, \mathcal{S}^\infty$ for $L^\infty(\F_T,P),\mathcal{S}^\infty(P)$, and $\lVert\cdot\rVert_\infty$ for $\lVert \cdot\rVert_{L^\infty}$.

We consider a general, possibly incomplete, financial market consisting of $d$ underlying risky assets with discounted price process $S=(S^i)_{i=1,\ldots,d}$ being a semimartingale, and a 
riskless asset with unit discounted price. Throughout, we assume that the risky asset price process 
\begin{equation}\label{eq:StockLocallyBdd}
	S\ \text{is locally bounded}
\end{equation}
(for the main result of the article, it has to be even continuous). Denoting $\mathcal{M}^e:=\mathcal{M}^e(S,P)$ the set of equivalent local martingale measures for $S$, we assume 
there exists at least one element $Q$ of $\mathcal{M}^e$ that has finite entropy $E\left[Z^Q_T\log Z^Q_T\right]$ with respect to $P$ in the sense that 
\begin{equation}\label{eq:NonEmptyELMMfE}
	\mathcal{M}^e_f:=\mathcal{M}^e_f(P):=\left\lbrace Q\in \mathcal{M}^e(P)\, \left\lvert\, E^P\left[Z^Q_T\log Z^Q_T\right]<\infty\right.\right\rbrace\neq \emptyset.
\end{equation}
In particular, there exists a unique measure $Q^E\in\mathcal{M}^e_{f}$, the so-called 
\emph{entropy minimizing martingale measure} (EMMM), that satisfies 
\begin{equation}\label{8.3.2017.2}
E^{Q^E}\Big[\log Z^{Q^E}_T\Big] = \inf_{Q\in\mathcal{M}^e_{f}} E^Q\Big[\log Z^Q_T\Big],
\end{equation}
cf.\ Theorem~2.1 in Frittelli~\cite{Frittelli2000}, which beyond boundedness of $S$ also extends to locally bounded $S$. 
We denote by $L(S)$ the space of $\filt$-predictable $S$-integrable $\R^d$-valued processes. For $\vartheta\in L(S)$, the stochastic integral of $\vartheta$ with respect to $S$ is 
denoted $\int_0^\cdot\vartheta^{\mathrm{tr}}_s\,dS_s$.
We work as in \cite{ManiaSchweizer05} with a space of admissible trading strategies
\begin{equation}\label{eq:DefStrategiesTheta}
	\Theta = \left\lbrace \vartheta\in L(S)\,\Big\lvert\,\Big. \begingroup\textstyle\int_0^\cdot\vartheta^{\mathrm{tr}}_sdS_s\endgroup\text{ is a }Q\text{-martingale for all }Q\in \mathcal{M}^e_f\right\rbrace .
\end{equation}
Note that such admissible trading strategies clearly exclude arbitrage opportunities. 

Consider two agents, $A$ and $B$, who are the seller and buyer of a GCC, respectively. We assume that $A$ and $B$ in addition to entering the contract, have access to the financial market. 
The agents' preferences are modeled by exponential utility functions $U_A, U_B$, with constant absolute risk-aversion parameters $\alpha_A,\alpha_B>0$, i.e., 
$U_A(x)=-\exp(-\alpha_A x)$ and $U_B(x)=-\exp(-\alpha_B x),\quad x\in\R$.
Let $X,Y\in\mathcal{S}^\infty$ and define 
\begin{equation}\label{eq:PayoffProcessesGGC}
	R(\tau,\sigma):= X_\tau\mathds{1}_{\{\tau\le \sigma\}}+Y_\sigma\mathds{1}_{\{\sigma< \tau\}}
\end{equation}
the GCC payoff of agent $B$ paid by agent $A$ at time $\tau\wedge \sigma,$ when $A$ and $B$ choose stopping strategies $\sigma$ and $\tau$, respectively, for $\tau,\sigma\in\mathcal{T}_0$.
Agents $A$ and $B$ have exogenous endowments given by the contingent claims $C_A,C_B\in L^\infty$, which are in general not replicable by trading in the financial market. 
By the randomness of $C_A$ and $C_B$, indifference valuations of the agents may also depend on their respective endowments. We are interested in Nash equilibrium points (see \cref{def:NEPDefnUtilMax}) 
in the stopping time strategies for the buyer and seller whose objectives are as follows. The seller $A$ wants to maximize her expected utility from terminal wealth $C_A-R(\tau,\sigma)+\int_{0}^T\vartheta^{\mathrm{tr}}_s\,dS_s$ after entering the contract at 
time $t=0$ and trading in the financial market according to a self-financing strategy $\vartheta=(\vartheta^i)_{i=1}^d$ in $\Theta$, with $\vartheta^i_t$ denoting the number of shares of 
asset $i$ held at time $t$, $t\in[0,T]$.
The corresponding maximization problem for the seller is 
\begin{equation}\label{eq:Defnu1}
	u_A(\tau,\sigma):=\sup_{\vartheta\in\Theta}E\left[-\exp\left(-\alpha_A\,\left(C_A-R(\tau,\sigma)+\int_{0}^T\vartheta^{\mathrm{tr}}_s\,dS_s\right)\right)\right]. 
\end{equation}
Similarly, the buyer of the contract wants to maximize her expected utility from terminal wealth $C_B + R(\tau,\sigma)+\int_{0}^T\vartheta^{\mathrm{tr}}_s\,dS_s$ and her 
maximization problem is 
\begin{equation}\label{eq:Defnu2}
	u_B(\tau,\sigma):=\sup_{\vartheta\in\Theta}E\left[-\exp\left(-\alpha_B\,\left(C_B+R(\tau,\sigma)+\int_{0}^T\vartheta^{\mathrm{tr}}_s\,dS_s\right)\right)\right]. 
\end{equation}
\begin{definition}\label{def:NEPDefnUtilMax}
 We say that a pair $(\tau^*,\sigma^*)\in\mathcal{T}_0\times\mathcal{T}_0$ is a Nash equilibrium point~(NEP) for the non-zero-sum game associated to \cref{eq:Defnu1,eq:Defnu2}
 if 
 \begin{equation}\label{eq:NEPdefn}
 	u_A(\tau^*,\sigma^*) \ge u_A(\tau^*,\sigma)\quad \text{and}\quad u_B(\tau^*,\sigma^*) \ge u_B(\tau,\sigma^*)\quad \forall (\tau,\sigma)\in\mathcal{T}_0\times\mathcal{T}_0.
 \end{equation}
\end{definition}
\begin{rem}
Alternatively, one may model the problem as a so-called extensive game in which players' decisions are sequential, and each player makes a decision depending on the ``nature'' (given by the filtration) 
and the past actions of her counter-party.  
The sequential decisions consist, at each step, of stopping the contract or not, and choosing the amount of underlyings held in the portfolio.
We refer to Gonz\'alez-D\'iaz et al.~\cite{intro.game} for an introduction to extensive games and related concepts.
At least in finite discrete time and finite~$\Omega$, one can easily prove that a NEP in the sense of \cref{eq:NEPdefn} induces a stochastic feedback Nash equilibrium in the extensive game described above. 
Namely, given a NEP \cref{eq:NEPdefn}, one takes $\tau^*,\sigma^*$ together with 
the investment strategies which attain the suprema \cref{eq:Defnu1,eq:Defnu2} for $\tau=\tau^*$ and $\sigma=\sigma^*$, and 
consider them as feedback strategies where the response function
is degenerated, i.e., each player simply ignores past actions of her counter-party. Only past actions of the nature are used because the quantities are in general 
stochastic. Note that given $(\tau^*,\sigma^*)$, the suprema \cref{eq:Defnu1,eq:Defnu2} can be determined separately since the investment strategy of one player 
has no effect on the wealth of the other player (trading has, e.g., no impact on the underlying's price). Since, in addition, the option's payoff 
cannot be influenced anymore after the first player stops the contract,
and, consequently, $\tau^*,\sigma^*$ are stopping strategies that implicitly condition that the other player has not stopped yet, 
the ``feedback'' strategies constructed above with degenerated response functions are also optimal in the set of all feedback strategies with arbitrary response functions. 
Furthermore, the same holds a fortiori when looking at a variant of this extended game in which each player only observes the stopping time but not the investment strategy of the
other player. Finally, since the NEPs are constructed backwards in time
(see the construction (2.3)-(2.7) in \cite{Kuehn2004} for the special case that stopping is discrete),
there exists a subgame perfect equilibrium. 
We leave it as an easy exercise for the reader to write down the finite extensive game
and prove the above assertions (since the players may stop 
simultaneously, one has to work with a nontrivial information partition in Definition~3.1.1 of \cite{intro.game}). Since the finite extensive game  
boils down to the game \cref{eq:Defnu1}/\cref{eq:Defnu2}, for the
continuous time modeling, we prefer to start directly with \cref{eq:Defnu1}/\cref{eq:Defnu2}.
\end{rem}
The main result of the article is the following theorem, and the proof is deferred to \cref{sect:proofMainResult}.
\begin{thm}\label{thm:NEPforGamePb}
Assume that 
\begin{equation}\label{eq:aspFiltContinuous}
 	\text{the filtration }\filt \text{ is continuous},
\end{equation}
i.e., any local $\filt$-martingale is $P$-a.s.\ continuous, and the payoff processes satisfy
\begin{equation}\label{eq:XleY}
	X,Y\in\mathcal{S}^\infty\ \text{with}\ X_t\le Y_t,\ t\in[0,T],\ P\text{-a.s.},
\end{equation}
and
\begin{equation}\label{eq:NoNegJumps}
\text{$X$ has only nonnegative jumps, and $Y$ has only nonpositive jumps.}
\end{equation}		
Then, the non-zero-sum game associated to $u_A, u_B$ given in \cref{eq:Defnu1,eq:Defnu2} with $\alpha_A,\alpha_B\in(0,\infty)$, $C_A,C_B\in L^\infty$ admits a NEP $(\tau^*,\sigma^*)\in\mathcal{T}_0^2$. 
\end{thm}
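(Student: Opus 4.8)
The plan is to reduce the stochastic game \cref{eq:Defnu1,eq:Defnu2} to a non-zero-sum Dynkin game in which both players evaluate the \emph{same} payoff process $R(\tau,\sigma)$, but through their own exponential utility indifference valuations, and then to solve that Dynkin game by carrying the iterative argument of Hamad\`ene and Zhang~\cite{HamadeneZhang2010} over to the nonlinear setting, with the optimal stopping result \cref{thm:NonlinSnellNEW} as the basic building block. For the reduction, recall that for an exponential utility with risk aversion $\alpha$ the pure-investment functional $W\mapsto v_\alpha(W):=-\tfrac1\alpha\log\inf_{\vartheta\in\Theta}E\big[\exp(-\alpha(W+\int_0^T\vartheta^{\mathrm{tr}}_s\,dS_s))\big]$ is cash-translation invariant, so that for any $G\in L^\infty$ one has $\sup_{\vartheta\in\Theta}E\big[-\exp(-\alpha(W+G+\int_0^T\vartheta^{\mathrm{tr}}_s\,dS_s))\big]=-\exp\big(-\alpha(v_\alpha(W)+\pi^{\alpha,W}_0(G))\big)$, where $\pi^{\alpha,W}_0(G):=v_\alpha(W+G)-v_\alpha(W)$ is the buyer's indifference value of $G$ for an agent with endowment $W$, and the conditional versions $(\pi^{\alpha,W}_t)$ are monotone and time-consistent on $\mathcal{S}^\infty$ by the properties of the dynamic exponential indifference valuation collected and extended in \cref{2.2.2017.1} (following \cite{ManiaSchweizer05}). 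Writing $\pi^B:=\pi^{\alpha_B,C_B}$ and $\bar\pi^A_t(\cdot):=-\pi^{\alpha_A,C_A}_t(-\,\cdot)$ for the seller's valuation of agent $A$, and using that $x\mapsto-e^{-\alpha x}$ is strictly increasing, one sees from \cref{eq:Defnu1,eq:Defnu2} that $(\tau^*,\sigma^*)$ is a NEP in the sense of \cref{eq:NEPdefn} if and only if $\tau^*$ maximises $\tau\mapsto\pi^B_0(R(\tau,\sigma^*))$ and $\sigma^*$ minimises $\sigma\mapsto\bar\pi^A_0(R(\tau^*,\sigma))$ over $\mathcal{T}_0$. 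Thus the holder maximises and the writer minimises the common payoff $R$, each under her own nonlinear valuation, and \cref{eq:XleY} makes stopping costly for whoever stops.

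Next I would set up the one-player best-response problems and the resulting coupled system. Given a recall time $\sigma\in\mathcal{T}_0$, the holder's best response is to optimally stop $X$ on the stochastic interval up to $\sigma$ with terminal reward $Y_\sigma$: by \cref{eq:XleY} and the exercise-precedence convention $\mathds{1}_{\{\tau\le\sigma\}}$, at the deadline the holder effectively receives $Y_\sigma\ge X_\sigma$. An application of \cref{thm:NonlinSnellNEW} to this problem --- which is where \cref{eq:aspFiltContinuous} and the nonnegative jumps of $X$ in \cref{eq:NoNegJumps} are used --- yields a $\pi^B$-Snell envelope $J^{B,\sigma}\in\mathcal{S}^\infty$ with $J^{B,\sigma}_\sigma=Y_\sigma$, and the best response $\tau^{(\sigma)}:=\inf\{t:J^{B,\sigma}_t=X_t\}$. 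Symmetrically, given an exercise time $\tau\in\mathcal{T}_0$, the writer's best response is to minimise $\bar\pi^A$ over stoppings of $Y$ up to $\tau$ with terminal reward $X_\tau$; applying \cref{thm:NonlinSnellNEW} to the payoff process $-Y$ with terminal reward $-X_\tau$ and negating --- now using the nonpositive jumps of $Y$ --- yields a value process $W^{A,\tau}\in\mathcal{S}^\infty$ and the best response $\sigma^{(\tau)}:=\inf\{t:W^{A,\tau}_t=Y_t\}$. The task is then to produce a fixed point: $(\tau^*,\sigma^*)\in\mathcal{T}_0^2$ with $\tau^*=\tau^{(\sigma^*)}$, $\sigma^*=\sigma^{(\tau^*)}$ and matching value processes $J^B:=J^{B,\sigma^*}$, $W^A:=W^{A,\tau^*}$.

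To obtain the fixed point I would run a Picard iteration as in \cite{HamadeneZhang2010}: set $\sigma^{(0)}:=T$, and for $n\ge0$ put $\tau^{(n+1)}:=\tau^{(\sigma^{(n)})}$, $\sigma^{(n+1)}:=\sigma^{(\tau^{(n+1)})}$, together with the associated value processes. The structure of the problem --- \cref{eq:XleY} and the monotonicity of the nonlinear Snell-envelope map in its data --- yields, as in \cite{HamadeneZhang2010}, that $(\sigma^{(n)})_n$ and $(\tau^{(n)})_n$ are monotone for the pointwise order of stopping times, hence converge $P$-a.s.\ to stopping times $\sigma^*,\tau^*$; the uniform $\mathcal{S}^\infty$-bounds inherited from $X,Y\in\mathcal{S}^\infty$ and $C_A,C_B\in L^\infty$, combined with the stability of the American indifference valuation and the reverse-continuity result \cref{pro:ConvOfExpIMpliesConvInProb}, then give convergence of the value processes to limits $J^B,W^A$ solving the coupled system. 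Finally one checks that $(\tau^*,\sigma^*)$ is a NEP of the Dynkin game of the first step: by \cref{thm:NonlinSnellNEW}, $\tau^{(\sigma^*)}$ attains the value $J^B_0$ of the holder's optimal stopping problem given $\sigma^*$, and a careful identification of this value with $\sup_\tau\pi^B_0(R(\tau,\sigma^*))$ --- handling the coincidence set $\{\tau^*=\sigma^*\}$ via \cref{eq:XleY} and the exercise-precedence convention (replacing $\tau^*$ by $T$ there) --- shows $\pi^B_0(R(\tau^*,\sigma^*))=\sup_\tau\pi^B_0(R(\tau,\sigma^*))$; the writer's inequality follows symmetrically. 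Translating back through $x\mapsto-e^{-\alpha x}$ then gives \cref{eq:NEPdefn}, extending \cite{Kuehn2004} to continuous-time stopping.

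The hard part will be the third step: the nonlinear Snell-envelope operators here are associated with BSDEs of quadratic growth and are only \emph{nonlinearly} time-consistent, so proving that the Picard iterates converge and that the limit genuinely solves the coupled system --- in particular that first-hitting-time optimality is preserved in the limit --- rests essentially on the continuity properties of the American exponential indifference valuation established in \cref{2.2.2017.1} and on the reverse-continuity result \cref{pro:ConvOfExpIMpliesConvInProb}. A further subtlety, already present in \cref{thm:NonlinSnellNEW}, is the bookkeeping at the times where the value processes meet the payoffs --- where the one-sided jump conditions \cref{eq:NoNegJumps} enter --- and on the event $\{\tau^*=\sigma^*\}$, where, in contrast to the complete-market case, both agents may have a simultaneous incentive to stop.
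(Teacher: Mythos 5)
Your proposal follows essentially the same route as the paper's proof: reduction to an indifference-valued non-zero-sum Dynkin game (\cref{pro:NEPinTermsofIndiffPrice}), best responses constructed via the nonlinear Snell envelope of \cref{thm:NonlinSnellNEW}, a monotone Hamad\`ene--Zhang iteration started from $T$, and limit passage via the continuity of the indifference valuation together with the reverse-continuity result \cref{pro:ConvOfExpIMpliesConvInProb}. The one point to make explicit is that the iterate cannot be taken as the raw first hitting time of the Snell envelope: on the event where that hitting time coincides with the opponent's current stopping time it must be reset to the \emph{previous own} iterate (not to $T$), as in \cref{eq:ApproxNEPOdd}/\cref{3.6.2018.1}, since both the monotonicity of the stopping-time sequences (\cref{lem:PropSTimes}) and the best-response property at each finite stage (\cref{lem:IneqJitauAndtaun}) hinge on this modification.
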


\begin{rem} The difference $Y-X\ge0$ can be interpreted as the penalty the writer of the 
GCC has to pay if recalling the option before it is exercised. 
Consequently, there is a negative attitude towards stopping as for both players 
it appears more advantageous that her counter-party terminates the game in her stead. 
Condition \cref{eq:NoNegJumps} guarantees that optimal stopping times are attained, and we need not deal with almost optimal stopping times.
\end{rem}

\section{Indifference value}\label{2.2.2017.1}
In this section, we first state old and new facts on exponential utility indifference valuation. Then, we characterize a NEP in terms of indifference values
for the payoffs \cref{eq:PayoffProcessesGGC}. Finally, we characterize the optimal 
stopping time when payoffs are evaluated at their indifference prices. 
This prepares the  proof of our main result \cref{thm:NEPforGamePb} concerning existence of a NEP for the game \cref{eq:Defnu1}/\cref{eq:Defnu2},
that we provide in \cref{sect:proofMainResult}.

The game problem  \cref{eq:NEPdefn} can be reformulated in terms of the utility indifference valuation of suitable claims, in a way that one obtains a non-zero-sum Dynkin game 
in which players evaluate payoffs directly by utility indifference. Before making this relation precise, we recall briefly the definition and some dynamic properties of the exponential 
indifference valuation for bounded claims. A relatively general study of this was performed by \cite{ManiaSchweizer05} in a setup analogous to 
the one of \cref{sect:ProblemFormulationUtilMax}, with $\Theta$ corresponding exactly to the space $\Theta_2$ of trading strategies in Delbaen et al.~\cite{DelbaenetAl2002}, where other possible spaces of trading strategies are also
compared. As we have adopted an analogous setup, we are able to use some results of \cite{ManiaSchweizer05} when the need arises.

Let $H$ be a contingent claim in $L^\infty(\F_T)$ and consider an agent who is willing to buy the claim $H$ at some time $t\in[0,T]$, and whose preferences are described by an exponential utility function $U$
for some risk-aversion parameter $\alpha\in(0,\infty)$, i.e.\  $U(x)=-e^{-\alpha x},\ x\in\R$. In addition, the agent has the random endowment~$C\in L^\infty(\F_T)$. 
The buyer's indifference value $\pi^{\alpha,C}_t(H)$ at time $t\in[0,T]$ is the amount that the agent needs to pay 
at time $t$ to receive the claim $H$ at terminal time $T$ so that the agent's maximal expected utility from additionally trading between $t$ and $T$ with zero initial capital
coincides with his maximal expected utility from solely trading with zero initial capital. In other words, $\pi^{\alpha,C}_t(H)$ is the amount that makes an agent indifferent between 
buying or not buying the claim $H$ and optimally trading in the financial market. More precisely for $H\in L^\infty$ and $t\in[0,T]$, the indifference value $\pi^{\alpha, C}_t(H)$ of $H$ at time $t$ is implicitly given by
\begin{equation}\label{eq:DefnIndPrice}
	\esssup_{\vartheta\in\Theta} E_t\Big[-e^{-\alpha \left(C+\int_t^T\vartheta^{\mathrm{tr}}_sdS_s\right)}\Big] = \esssup_{\vartheta\in\Theta} E_t\Big[-e^{-\alpha \left(C+H-\pi^{\alpha,C}_t(H)+\int_t^T\vartheta^{\mathrm{tr}}_sdS_s\right)}\Big].
\end{equation}
Under \cref{eq:StockLocallyBdd,eq:NonEmptyELMMfE}, Theorem~2.2 in \cite{DelbaenetAl2002} (whose condition (2.13) was shown by Kabanov and Stricker~\cite{KabanovStricker2002} to be unneeded) and a 
dynamic programming principle imply that the left-hand side of \cref{eq:DefnIndPrice} almost surely does not vanish, and hence a direct reformulation of \cref{eq:DefnIndPrice} yields 
\begin{equation}\label{eq:IndPricePrimalCharact}
	\pi^{\alpha,C}_t(H) = - \frac{1}{\alpha}\log\left(\frac{\displaystyle\esssup_{\vartheta\in\Theta} E_t\Big[-e^{-\alpha\big(C+H+\int_t^T\vartheta^{\mathrm{tr}}_sdS_s\big)}\Big]}{\displaystyle\esssup_{\vartheta\in\Theta} E_t\big[-e^{-\alpha\big(C+\int_t^T\vartheta^{\mathrm{tr}}_sdS_s\big)}\big]}\right).
\end{equation}
In general, the indifference value depends on the exogenous random endowment~$C$, but by replacing the measure $P$ with $P_C$ defined via
$dP_C/dP := \exp(-\alpha C)/E[\exp(-\alpha C)]$, one can reduce it to the case without a random endowment. In particular $\Theta$ and $\mathcal{M}^e_f$ remain the same when changing from 
$P$ to $P_C$. Note that 
\begin{equation*}
\pi^{\alpha,C}_t(H) = \pi^{\alpha,0}_t(C+H) - \pi^{\alpha,0}_t(C),
\end{equation*}
which immediately follows from \cref{eq:IndPricePrimalCharact}. 
In addition by Proposition~2 of \cite{ManiaSchweizer05}, one also has a dual representation
\begin{equation*}
\pi^{\alpha,0}_t(H) = \essinf_{Q\in\mathcal{M}^e_{f}}\left(E^Q_t[H] + \frac{1}{\alpha}\left(E^Q_t\Big[\log\frac{Z^Q_T}{Z^Q_t}\Big]-\alpha \essinf_{Q\in\mathcal{M}^e_{f}} E^Q_t\Big[\frac{1}{\alpha}\log\frac{Z^Q_T}{Z^Q_t}\Big]\right)\right).
\end{equation*}
In the following proposition, we state some dynamic properties of the exponential indifference value.
\begin{proposition}\label{pro:IndiffValueProperties}
	For $\alpha\in(0,\infty)$ and $C\in L^\infty$,  $(\pi^{\alpha,C}_t)_{t\in[0,T]}$ defines mappings $\pi^{\alpha,C}_t:L^\infty(\F_T)\ni H\mapsto \pi^{\alpha,C}_t(H)\in L^\infty(\F_t)$ satisfying
\begin{enumerate}
	\item \emph{``C\`adl\`ag version''}:  For $H\in L^\infty$, $\lVert \pi^{\alpha,C}_t(H)\rVert_\infty\le \lVert H\rVert_\infty,\ t\in[0,T],$ and there exists a c\`adl\`ag process $\Gamma^H$ with $\Gamma^H_t=\pi^{\alpha,C}_t(H)\ P$-a.s.\ for every $t\in[0,T]$ and
	$$
	\Gamma^H_\tau = -\frac{1}{\alpha}\log\,\essinf_{\vartheta\in\Theta}\, E^{Q^{E,C}}_\tau\Big[e^{-\alpha\left(H+\int_\tau^T\vartheta^{\mathrm{tr}}_sdS_s\right)}\Big]=:\pi^{\alpha,C}_\tau(H),\quad \text{for }\tau\in\mathcal{T}_0,
	$$
	where $Q^{E,C}$ is EMMM from \cref{8.3.2017.2} after replacing $P$ with $P_C$. 
	\item \emph{``(Strict) monotonicity''}: If $H^1\le H^2$ then $\pi^{\alpha,C}_\tau(H^1)\le \pi^{\alpha,C}_\tau(H^2)$ for all $\tau\in\mathcal{T}_0$. If in addition $\pi^{\alpha,C}_0(H^1)= \pi^{\alpha,C}_0(H^2)$,
	then $H^1=H^2$.
	\item \emph{``Replication invariance''}: $\pi^{\alpha,C}_\tau\left(H+x_\tau+\int_\tau^T\vartheta^{\mathrm{tr}}_sdS_s\right) = \pi^{\alpha,C}_\tau(H)+x_\tau$, for any $H\in L^\infty,\ \tau\in\mathcal{T}_0,\ x_\tau\in L^\infty(\F_\tau),\ \vartheta\in\Theta$.
	\item \emph{``Replication cost preservation''}: $\pi^{\alpha,C}_\tau\left(x_\tau+\int_\tau^T\vartheta^{\mathrm{tr}}_sdS_s\right) = x_\tau$, for 
	any $\tau\in\mathcal{T}_0$,\ $x_\tau\in L^\infty(\F_\tau),\ \vartheta\in\Theta.$
	\item \emph{``Local property''}: $\pi^{\alpha,C}_\tau(H^1\mathds{1}_\Lambda + H^2\mathds{1}_{\Lambda^c}) = \mathds{1}_\Lambda\pi^{\alpha,C}_\tau(H^1) + \mathds{1}_{\Lambda^c}\pi^{\alpha,C}_\tau(H^2)$, for any $H^1,H^2\in L^\infty,\ \tau\in\mathcal{T}_0,\ \Lambda\in\F_\tau$.
	\item \emph{``(Stopping) time consistency''}: $\pi^{\alpha,C}_\tau(H) = \pi^{\alpha,C}_\tau(\pi^{\alpha,C}_\sigma(H))$, for any $H\in L^\infty$, $\tau\in\mathcal{T}_0$ with $\sigma\in\mathcal{T}_\tau$.
	\item \emph{``Continuity''}: If $\filt$ is continuous, then for any sequence $(H^n)_{n\in\N}$ bounded in $L^\infty$ that converges in probability to some $H\in L^\infty$ as $n\to\infty$, one has
	\[
		\sup_{t\in[0,T]}\lvert \pi^{\alpha,C}_t(H^n)-\pi^{\alpha,C}_t(H)\rvert\longrightarrow 0\quad \text{in probability as }n\to\infty.
	\]
\end{enumerate}
\end{proposition}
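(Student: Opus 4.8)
The plan is to derive all seven items from the two primal characterizations \cref{eq:IndPricePrimalCharact} (with $C=0$, after the measure change $P\rightsquigarrow P_C$) and the c\`adl\`ag representation via the EMMM $Q^{E,C}$, together with the corresponding results of \cite{ManiaSchweizer05} and \cite{DelbaenetAl2002}. First I would record that, by the reduction $\pi^{\alpha,C}_t(H)=\pi^{\alpha,0}_t(C+H)-\pi^{\alpha,0}_t(C)$ noted just before the proposition, it suffices to treat the case $C=0$ and then carry the identities back; since $\Theta$ and $\mathcal M^e_f$ are unchanged under $P\rightsquigarrow P_C$, all the structural properties transfer verbatim, and only the affine correction $-\pi^{\alpha,0}_t(C)$ has to be tracked (it cancels in monotonicity, the local property and time-consistency, and contributes the same constant on both sides of replication invariance). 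So the real work is for $C=0$.

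For item (1), the $L^\infty$-bound $\lVert\pi^{\alpha,0}_t(H)\rVert_\infty\le\lVert H\rVert_\infty$ follows from \cref{eq:IndPricePrimalCharact} by the monotonicity of the inner $\esssup$ in $H$ together with the replication-cost identity applied to the constant claims $\pm\lVert H\rVert_\infty$; the existence of a c\`adl\`ag modification $\Gamma^H$ and the stated formula in terms of $Q^{E,C}$ is exactly the content of the dynamic exponential indifference value in \cite{ManiaSchweizer05} (their dynamic value process is a BSDE solution, hence has a c\`adl\`ag version), which I would invoke directly; the $\esssup$ over $\Theta$ in the numerator of \cref{eq:IndPricePrimalCharact} becomes an $\essinf$ of $E^{Q^{E,C}}_\tau[e^{-\alpha(\cdot)}]$ after the standard dual/measure-change manipulation (denominator normalized to $-1$). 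Items (3) and (4) are immediate from \cref{eq:IndPricePrimalCharact}: adding $x_\tau\in L^\infty(\F_\tau)$ pulls $e^{\alpha x_\tau}$ out of the conditional expectation and hence $x_\tau$ out of the $-\frac1\alpha\log$; adding $\int_\tau^T\vartheta^{\mathrm{tr}}_sdS_s$ with $\vartheta\in\Theta$ leaves both $\esssup$'s invariant by the substitution $\vartheta'\mapsto\vartheta'\mp\vartheta$, using that $\Theta$ is a vector space and the integrals are true $Q$-martingales for all $Q\in\mathcal M^e_f$. Item (4) is the special case $H=0$ of (3) once $\pi^{\alpha,0}_\tau(0)=0$, which is again \cref{eq:IndPricePrimalCharact}. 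Item (5), the local property, follows by partitioning $\Omega=\Lambda\cup\Lambda^c$ with $\Lambda\in\F_\tau$: on $\Lambda$ the conditional expectations see only $H^1$ and on $\Lambda^c$ only $H^2$, and the $\essinf_{\vartheta}$ localizes because $\Theta$ restricted to $\Lambda$ (i.e.\ $\vartheta\mathds 1_\Lambda$) is again admissible — the measurability is handled by the usual lattice argument showing the family $\{E^{Q^{E}}_\tau[e^{-\alpha(\cdot+\int\vartheta dS)}]:\vartheta\in\Theta\}$ is directed downward.

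Item (2): monotonicity is plain from \cref{eq:IndPricePrimalCharact} since $H^1\le H^2$ gives the pointwise inequality inside the inner $\esssup$ and $-\frac1\alpha\log$ is increasing. The strict part — if moreover $\pi^{\alpha,C}_0(H^1)=\pi^{\alpha,C}_0(H^2)$ then $H^1=H^2$ — I would get from the dual representation stated before the proposition: $\pi^{\alpha,0}_0(H)=\inf_{Q\in\mathcal M^e_f}(E^Q[H]+\frac1\alpha H(Q\|P))-\text{const}$, so equality of the values at $H^1\le H^2$ forces, at the (unique, by finite-entropy) optimizing measure $Q^E$ — or more robustly along an optimizing sequence — that $E^{Q^E}[H^2-H^1]=0$ while $H^2-H^1\ge0$ and $Q^E\sim P$, whence $H^1=H^2$ $P$-a.s. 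Item (6), time-consistency, is the BSDE flow property of the dynamic indifference value from \cite{ManiaSchweizer05}; alternatively it follows from the EMMM formula in (1) by the tower property for $E^{Q^{E,C}}$ combined with the fact that the optimal $\vartheta$ on $[\tau,T]$ can be concatenated from the optimal one on $[\sigma,T]$ and the optimal one on $[\tau,\sigma]$ — I would phrase this as a dynamic-programming identity for the multiplicative value process $\essinf_\vartheta E^{Q^{E,C}}_\tau[e^{-\alpha(H+\int_\tau^T\vartheta dS)}]$. Finally, item (7), the continuity statement under a continuous filtration, is the one genuinely new input and the main obstacle: here I would appeal to \cref{pro:ConvOfExpIMpliesConvInProb} (the reverse continuity result announced in the introduction) together with a forward continuity estimate. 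Concretely, boundedness of $(H^n)$ in $L^\infty$ plus convergence in probability gives, via dominated convergence, that $E^{Q^E}[e^{-\alpha H^n}]\to E^{Q^E}[e^{-\alpha H}]$ and more generally that the associated BSDE data converge; continuity of the filtration makes the quadratic-BSDE governing the dynamic exponential value stable, so one obtains $\lVert\Gamma^{H^n}-\Gamma^H\rVert_{\mathcal S^\infty}\to 0$, or at least $\sup_t|\pi^{\alpha,C}_t(H^n)-\pi^{\alpha,C}_t(H)|\to0$ in probability, by combining an a-priori $\mathcal S^\infty$-bound (from item (1)) with the reverse-continuity Proposition \ref{pro:ConvOfExpIMpliesConvInProb}. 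I expect the delicate point to be upgrading pointwise/in-probability convergence of the terminal data to \emph{uniform-in-time} convergence of the value processes without a Lipschitz generator — this is exactly why the continuity of $\filt$ (all martingales continuous, hence the BMO/quadratic-growth machinery behaves well) is assumed, and why the separate Proposition \ref{pro:ConvOfExpIMpliesConvInProb} is needed rather than a one-line dominated-convergence argument.
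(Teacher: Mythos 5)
Your overall route matches the paper's, which for this proposition is essentially a citation: the paper reduces to $C=0$ via $P\rightsquigarrow P_C$ exactly as you do, and then refers items (1), (3)--(7) to Propositions~4, 12, 14 and 15 of Mania and Schweizer together with ``straightforward generalizations''; your explicit manipulations of \cref{eq:IndPricePrimalCharact} for (3)--(5) and the dynamic-programming/tower argument for (6) are those generalizations spelled out. The one place where you take a genuinely different route is the \emph{strict} monotonicity in (2). The paper uses attainment of the \emph{primal} supremum in the numerator of \cref{eq:IndPricePrimalCharact} (Theorem~2.2 of Delbaen et al.): with $\vartheta^1$ optimal for $H^1$ one gets $E[e^{-\alpha(H^1+\int\vartheta^{1,\mathrm{tr}}dS)}]=E[e^{-\alpha(H^2+\int\vartheta^{1,\mathrm{tr}}dS)}]$ with pointwise-ordered integrands, hence $H^1=H^2$. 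You instead evaluate the dual representation at its minimizer. That also works, but two caveats: the minimizer is not $Q^E$ (it is the entropy-minimizing measure relative to $P_{H^2}$, so its existence and equivalence to $P$ need Frittelli's theorem after the bounded change of measure, not \cref{8.3.2017.2} itself); and your ``more robustly along an optimizing sequence'' fallback does \emph{not} work --- from $E^{Q^n}[H^2-H^1]\to 0$ with $Q^n\sim P$ but $Q^n$ possibly degenerating, you cannot conclude $H^1=H^2$. So the dual argument stands or falls with actual attainment.

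One further correction on item (7): \cref{pro:ConvOfExpIMpliesConvInProb} cannot be an ingredient here --- it is the \emph{reverse} implication (convergence of values $\Rightarrow$ convergence of claims) and is proved in the appendix using item (7)-adjacent BMO machinery, not the other way around. The forward continuity you need is precisely Proposition~15 of Mania and Schweizer (stability of the quadratic BSDE / BMO bounds under a continuous filtration), which the paper simply cites; your BSDE-stability sentence is the correct mechanism, and the appeal to the reverse proposition should be dropped.
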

The proofs of these properties can be either found in Propositions~4, 12, 14, and 15 of Mania and Schweizer~\cite{ManiaSchweizer05} or are straightforward generalizations. Note that only the case $C=0$ has to be considered, since the extension to general $C\in L^\infty$ is straightforward by replacing $P$ with $P_C$.
For the strict monotonicity in 2., one uses that the supremum in the numerator of \cref{eq:IndPricePrimalCharact} is attained, see Theorem~2.2 in \cite{DelbaenetAl2002}.

The following result can be seen as a reverse of part 7.\ above. 
\begin{proposition}\label{pro:ConvOfExpIMpliesConvInProb}
Let $\filt$ be continuous, $\alpha\in(0,\infty)$ and $C\in L^\infty$. Let $(H^n)_{n\in\N}$ and $(\eta^n)_{n\in\N}$ be bounded sequences in $L^\infty$ such that $\eta^n\le 0\ P\text{-a.s.}$ for all $n\in\N$ and 
$\pi^{\alpha,C}_0(H^n+\eta^n)-\pi^{\alpha,C}_0(H^n)\longrightarrow 0$, as $n\to\infty$. Then 
$\eta^n\rightarrow 0$ in probability, as $n\to \infty.$ 
\end{proposition}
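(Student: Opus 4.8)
The plan is to realise the conditional indifference value as the solution of a quadratic BSDE, to pass to the difference of the two equations and linearise it, and then to exploit that the terminal data are \emph{uniformly} bounded — together with the BMO estimates that become available because $\filt$ is continuous — so as to keep the change of measure produced by the linearisation uniformly equivalent to $P$. First I would reduce to $C=0$ by working under $P_C\sim P$, whose density is bounded away from $0$ and $\infty$, using $\pi^{\alpha,C}_0(G)=\pi^{\alpha,0}_0(C+G)-\pi^{\alpha,0}_0(C)$; so assume $C=0$ and set $K:=\sup_n(\|H^n\|_\infty\vee\|\eta^n\|_\infty)<\infty$. Because $\filt$ is continuous, $t\mapsto\pi^{\alpha,0}_t(H)$ is the unique $\mathcal{S}^\infty$-solution of a BSDE $Y_t=H+\int_t^T g(s,Z_s)\,d\langle M\rangle_s-\int_t^T Z_s\,dM_s$ with $M$ a fixed continuous martingale and $g$ measurable, locally Lipschitz and of quadratic growth in $z$, and \emph{not depending on the terminal condition} (\cite{ManiaSchweizer05}, Thm.~13, together with part~1 of \cref{pro:IndiffValueProperties}). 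With $Y^{1,n}:=\pi^{\alpha,0}_\cdot(H^n)$ and $Y^{2,n}:=\pi^{\alpha,0}_\cdot(H^n+\eta^n)$, part~1 of \cref{pro:IndiffValueProperties} gives $\|Y^{1,n}\|_{\mathcal{S}^\infty}\le K$ and $\|Y^{2,n}\|_{\mathcal{S}^\infty}\le 2K$ for every $n$, and the standard a priori bounds for quadratic BSDEs then control the $\mathrm{BMO}$-norms of $\int_0^\cdot Z^{1,n}_s\,dM_s$ and $\int_0^\cdot Z^{2,n}_s\,dM_s$ by a constant depending only on $\alpha$ and $K$.

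Next I would linearise, as in the proof of the comparison theorem for quadratic BSDEs. Put $\delta Y^n:=Y^{2,n}-Y^{1,n}$ and $\delta Z^n:=Z^{2,n}-Z^{1,n}$. The growth of $g$ allows one to write $g(s,Z^{2,n}_s)-g(s,Z^{1,n}_s)=\beta^n_s\,\delta Z^n_s$ for a predictable $\beta^n$ with $|\beta^n_s|\le C(1+|Z^{1,n}_s|+|Z^{2,n}_s|)$, so that $N^n:=\int_0^\cdot\beta^n_s\,dM_s$ is a $\mathrm{BMO}$ martingale with $\sup_n\|N^n\|_{\mathrm{BMO}}<\infty$. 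Hence $\delta Y^n_t=\eta^n-\int_t^T\delta Z^n_s\,d\bigl(M_s-\int_0^s\beta^n_r\,d\langle M\rangle_r\bigr)$; letting $\tilde Q^n$ have density $d\tilde Q^n/dP=\mathcal{E}(N^n)_T$ (a uniformly integrable martingale, $N^n$ being $\mathrm{BMO}$), Girsanov's theorem and the stability of $\mathrm{BMO}$ under this change of measure make the stochastic integral a true $\tilde Q^n$-martingale, whence $\delta Y^n_t=E^{\tilde Q^n}_t[\eta^n]$. In particular, since $\eta^n\le 0$,
\[
	E^{\tilde Q^n}\bigl[\,|\eta^n|\,\bigr]=-\bigl(\pi^{\alpha,0}_0(H^n+\eta^n)-\pi^{\alpha,0}_0(H^n)\bigr)\longrightarrow 0 .
\]

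It then remains to show that the $\tilde Q^n$ stay uniformly non-degenerate relative to $P$. Since $d\tilde Q^n/dP=\exp\bigl(N^n_T-\tfrac12\langle N^n\rangle_T\bigr)$ and $\sup_n\|N^n\|_{\mathrm{BMO}}<\infty$, the John--Nirenberg inequality gives uniform exponential bounds $\sup_nE\bigl[e^{\lambda|N^n_T|}\bigr]<\infty$ and $\sup_nE\bigl[e^{\lambda\langle N^n\rangle_T}\bigr]<\infty$ for small $\lambda>0$, hence $\sup_nP\bigl(d\tilde Q^n/dP<\rho\bigr)\to 0$ as $\rho\downarrow 0$. Therefore the densities $dP/d\tilde Q^n$ are uniformly integrable under the $\tilde Q^n$; equivalently, there is a function $\phi:(0,1]\to(0,\infty)$ with $\tilde Q^n(A)\ge\phi(P(A))$ for all $n$ and all $A\in\F_T$. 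If $\eta^n$ did not converge to $0$ in $P$-probability, there would be $\eps>0$ and a subsequence with $P(A_n)\ge\eps$, $A_n:=\{|\eta^n|\ge\eps\}$, along which $E^{\tilde Q^n}[|\eta^n|]\ge\eps\,\tilde Q^n(A_n)\ge\eps\,\phi(\eps)>0$, contradicting the display. Thus $\eta^n\to 0$ in probability.

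The main obstacle is the input that makes the continuity of $\filt$ pay off: the BSDE representation of $\pi^{\alpha,0}_\cdot(H)$ with a quadratic generator in the present setting, and — above all — the \emph{uniform-in-$n$} $\mathrm{BMO}$ bounds on $\int_0^\cdot Z^{i,n}_s\,dM_s$ and on the linearisation $N^n$. It is exactly this uniformity that forces $\{\tilde Q^n\}$ to remain uniformly equivalent to $P$ and thereby upgrades the qualitative strict monotonicity of part~2 of \cref{pro:IndiffValueProperties} to the quantitative ``reverse-continuity'' statement asserted here; once the uniform estimates are secured, the linearisation and the concluding contradiction are routine.
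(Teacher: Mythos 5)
Your proof is correct and follows essentially the same route as the paper's: after reducing to $C=0$, you use the Mania--Schweizer representation to write $\pi^{\alpha,0}_0(H^n+\eta^n)-\pi^{\alpha,0}_0(H^n)=E^{\tilde Q^n}[\eta^n]$ with Girsanov kernels that are \emph{uniformly} bounded in BMO thanks to the uniform $L^\infty$ bounds on $(H^n)$ and $(\eta^n)$, and then exploit that uniformity to keep the measures $\tilde Q^n$ uniformly non-degenerate relative to a fixed reference measure. The only divergence is the final step, where the paper applies Kazamaki's reverse H\"older inequality to the inverse densities $(Z^n_T)^{-1/(p-1)}$ (with the exponent $p$ chosen uniformly in $n$) followed by H\"older's inequality, whereas you use John--Nirenberg exponential moment bounds to control $P\bigl(d\tilde Q^n/dP<\rho\bigr)$ uniformly; both are standard consequences of the uniform BMO estimate and lead to the same conclusion.
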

The result in \cref{pro:ConvOfExpIMpliesConvInProb} is new to the best of our knowledge, and constitutes a crucial ingredient in the proofs of both 
\cref{thm:NonlinSnellNEW,lem:2ndEqTowardsNEP}, needed for achieving our main result \cref{thm:NEPforGamePb}.
We include the proof in \cref{app:AppendixB}.
\begin{note}\label{15.3.2017.1}
Let $\alpha>0$. There exists a market model with a sequence of nonnegative 
claims~$(H^n)_{n\in\N}\subset L^\infty$ such that $P[\liminf_{n\to\infty} H^n = \infty]>0$, but the sequence $\pi^{\alpha,0}_0(H^n)$ of indifference values is uniformly bounded.
\end{note}
\begin{proof}[Proof of \cref{15.3.2017.1}]
Consider a market model with $S=1$, i.e., there are no hedging instruments
and take some $A\in\mathcal{F}$ with $P[A]\in (0,1)$.
For the nondecreasing sequence of European claims $H^n:=n \mathds{1}_A$,\ $n\in\N$, the indifference values~$\pi^{\alpha,0}_0(H^n)$ satisfy $\exp(-\alpha\,\pi^{\alpha,0}_0(H^n)) = \exp(-\alpha n)P[A] + 1-P[A]$, and thus 
$\pi^{\alpha,0}_0(H^n)$ increases to the finite constant $-\ln(1-P[A])/\alpha$, as $n$ tends to infinity.
\end{proof}

In the sequel, we denote by 
\begin{equation}\label{26.7.2017.1}
\pi^A:=\pi^{\alpha_A,C_A}\qquad\mbox{and}\qquad \pi^B:=\pi^{\alpha_B,C_B}
\end{equation}
the exponential utility indifference valuation operators for agents $A$ and $B$, respectively. 
In terms of indifference values, a NEP for the game \cref{eq:Defnu1}/\cref{eq:Defnu2} can be characterized as follows.
\begin{proposition}\label{pro:NEPinTermsofIndiffPrice}
	Let $X,Y\in \mathcal{S}^\infty$. A pair $(\tau^*,\sigma^*)\in\mathcal{T}_0\times\mathcal{T}_0$ is a NEP for the non-zero-sum game 
\cref{eq:Defnu1}/\cref{eq:Defnu2} if and only if for all $\tau,\sigma\in\mathcal{T}_0,$
	\begin{equation*}
	\pi^A_0\big(-R(\tau^*,\sigma^*)\big) \ge \pi^A_0\big(-R(\tau^*,\sigma)\big)\quad \text{and}\quad \pi^B_0\big(R(\tau^*,\sigma^*)\big) \ge \pi^B_0\big(R(\tau,\sigma^*)\big).
	\end{equation*}
\end{proposition}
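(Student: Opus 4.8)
The plan is to show that, once one fixes the two agents' optimal investment problems, each of the two defining inequalities of a NEP is nothing but a strictly monotone transform of the corresponding indifference-value inequality. Concretely, I would apply the primal characterization \cref{eq:IndPricePrimalCharact} at time $t=0$. First I would record that for any $\tau,\sigma\in\mathcal{T}_0$ the payoff $R(\tau,\sigma)$ from \cref{eq:PayoffProcessesGGC} is a genuine element of $L^\infty(\F_T)$: since $X,Y\in\mathcal{S}^\infty$ have c\`adl\`ag paths, $X_\tau$ and $Y_\sigma$ are random variables, $R(\tau,\sigma)$ is $\F_{\tau\wedge\sigma}$-measurable, and $\lVert R(\tau,\sigma)\rVert_\infty\le\lVert X\rVert_{\mathcal{S}^\infty}\vee\lVert Y\rVert_{\mathcal{S}^\infty}$, so that $\pi^A_0(-R(\tau,\sigma))$ and $\pi^B_0(R(\tau,\sigma))$ (see \cref{26.7.2017.1}) are well-defined. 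Note this uses only $X,Y\in\mathcal{S}^\infty$, not $X\le Y$ nor the jump conditions.

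Next I would specialize \cref{eq:IndPricePrimalCharact} to $t=0$. Taken with risk aversion $\alpha_A$, endowment $C_A$ and claim $H=-R(\tau,\sigma)$, the numerator of the fraction there equals $u_A(\tau,\sigma)$ of \cref{eq:Defnu1} (up to the usual identification of the real-valued supremum with the $\F_0$-measurable essential supremum, justified by the dynamic programming principle together with stability of $\Theta$ under pasting along $\F_0$-sets), while the denominator, say $V_A:=\esssup_{\vartheta\in\Theta}E_0\big[-e^{-\alpha_A(C_A+\int_0^T\vartheta^{\mathrm{tr}}_s\,dS_s)}\big]$, does not depend on $(\tau,\sigma)$. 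Both $u_A(\tau,\sigma)$ and $V_A$ take values in $(-\infty,0)$: finiteness follows from taking $\vartheta=0$ together with boundedness of $C_A$ and $R(\tau,\sigma)$, and non-vanishing is the fact (from Theorem~2.2 of \cite{DelbaenetAl2002}) recalled just before \cref{eq:IndPricePrimalCharact}. Hence $\pi^A_0(-R(\tau,\sigma))=-\tfrac1{\alpha_A}\log\!\big(u_A(\tau,\sigma)/V_A\big)$, and symmetrically $\pi^B_0(R(\tau,\sigma))=-\tfrac1{\alpha_B}\log\!\big(u_B(\tau,\sigma)/V_B\big)$ with $V_B:=\esssup_{\vartheta\in\Theta}E_0\big[-e^{-\alpha_B(C_B+\int_0^T\vartheta^{\mathrm{tr}}_s\,dS_s)}\big]\in(-\infty,0)$.

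The conclusion is then immediate: for fixed $\alpha>0$ and fixed $v<0$, the map $x\mapsto-\tfrac1\alpha\log(x/v)$ is a strictly increasing bijection of $(-\infty,0)$ onto $\R$ (its derivative is $-1/(\alpha x)>0$). Applying this with $(\alpha,v)=(\alpha_A,V_A)$, for every $\sigma\in\mathcal{T}_0$ the inequality $u_A(\tau^*,\sigma^*)\ge u_A(\tau^*,\sigma)$ is equivalent to $\pi^A_0(-R(\tau^*,\sigma^*))\ge\pi^A_0(-R(\tau^*,\sigma))$; similarly with $(\alpha_B,V_B)$ for the buyer. Running over all $\sigma$ (resp.\ all $\tau$) and combining the two families of equivalences turns \cref{eq:NEPdefn} exactly into the pair of stated indifference inequalities.

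The only delicate point — and the one I'd expect to have to argue carefully — is the passage between the number-valued $u_A,u_B$ of \cref{eq:Defnu1,eq:Defnu2} and the $\F_0$-measurable essential suprema in \cref{eq:IndPricePrimalCharact} when $\F_0$ is not $P$-trivial, since then the two indifference inequalities are statements about $\F_0$-measurable random variables whereas \cref{eq:NEPdefn} compares numbers. The ``$\Leftarrow$'' direction is fine by taking expectations. For ``$\Rightarrow$'', if a $P$-a.s.\ inequality failed on some $\Lambda\in\F_0$ with $P[\Lambda]>0$, I would paste the deviating stopping time on $\Lambda$ with $\sigma^*$ (resp.\ $\tau^*$) on $\Lambda^c$ — again a stopping time — and invoke the local property (part~5 of \cref{pro:IndiffValueProperties}) of $\pi^A$ (resp.\ $\pi^B$) and of $R(\cdot,\cdot)$ to obtain a strictly profitable deviation, contradicting the NEP property; under the customary assumption that $\F_0$ is $P$-trivial this issue does not arise at all.
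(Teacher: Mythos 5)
Your proposal is correct and follows essentially the same route as the paper: both rewrite $u_A$ and $u_B$ via the primal characterization \cref{eq:IndPricePrimalCharact} at $t=0$ as a strictly monotone transform of $\pi^A_0(-R(\cdot,\cdot))$, resp.\ $\pi^B_0(R(\cdot,\cdot))$, times the $(\tau,\sigma)$-independent negative value of the pure investment problem. Your additional care about non-trivial $\F_0$ (pasting along $\F_0$-sets via the local property) addresses a point the paper glosses over, but the core argument is identical.
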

\begin{proof}
    First note that by \cref{eq:IndPricePrimalCharact}, the indifference value for claim $H\in L^\infty$ and risk-aversion parameter $\alpha\in (0,\infty)$ satisfies 
    \begin{equation}\label{eq:ToRewriteIntermsofIndiffPrices}
    	\sup_{\vartheta\in\Theta} E\Big[-e^{-\alpha\big(C+H+\int_0^T\vartheta^{\mathrm{tr}}_sdS_s\big)}\Big] = e^{-\alpha \pi^{\alpha,C}_0(H)}\sup_{\vartheta\in\Theta} E\big[-e^{-\alpha\big(C+\int_0^T\vartheta^{\mathrm{tr}}_sdS_s\big)}\big]< 0.
    \end{equation}
Hence one obtains the required equivalence after substituting \cref{eq:ToRewriteIntermsofIndiffPrices} into \cref{eq:Defnu1,eq:Defnu2} for risk-aversion parameters $\alpha_A, \alpha_B$, exogenous endowments $C_A,C_B$,
and for the claims $R(\tau^*,\sigma^*)$, $R(\tau^*,\sigma)$, $R(\tau,\sigma^*)$.
\end{proof}

The game is typically of non-zero-sum type since under market incompleteness, the implication 
\begin{equation}\label{11.3.2017.1}
\pi^A_0(-H^1)\le \pi^A_0(-H^2)\ \implies \pi^B_0(H^1)\ge \pi^B_0(H^2)
\end{equation}
does not hold in general, for $H^1,H^2\in L^\infty$. On the other hand, in a complete market, the indifference valuations of both players are the replication cost 
which yields \cref{11.3.2017.1}.

Proposition~\ref{pro:NEPinTermsofIndiffPrice} establishes a relation to
optimal stopping problems under the utility-indifference valuation, which is not homogenous in the payoff. Before stating our contribution (Theorem~\ref{thm:NonlinSnellNEW}), let us discuss the existing literature on optimal stopping under nonlinear expectations.

\subsection{Literature review on nonlinear optimal stopping and new contribution}\label{14.6.2018.1}

Beyond the classical theory of optimal stopping under linear expectations surveyed in the seminal article El Karoui~\cite{ElKaroui1981},
the theory of optimal stopping under nonlinear expectation is in general also quite well-developed. Studies in the latter direction have mostly concentrated on sublinear expectations 
that are positively homogeneous, and hence can as well be associated to dynamic coherent risk measures; cf.\ among others \cite{KaratzasZamfirescu2005, Riedel2009, EkrenTouziZhang2014, NutzZhang2015}. 
But these do not include the European utility indifference valuation, which in general is neither subadditive nor positively homogeneous.  
The closest article to our work on the American indifference value is Bayraktar et al.~\cite{BayraktarKaratzasYao2010}, which solves the problem of optimal stopping under convex risk measures in a Brownian filtration setting. 
By making use of a representation of convex risky measures from Delbaen et al.~\cite{DelbaenPR10}, they consider risk measures that can be written as a worst case expectation of the payoff 
plus a proper convex penalty function in which the Girsanov kernels of equivalent probability measures are plugged. By this representation, which in turn relies on the predictable representation property of Brownian martingales as 
stochastic integrals, the problem can be solved by similar methods as for problems of robust (worst-case) combined stochastic control and optimal stopping, see, e.g., Karatzas and Zamfirescu~\cite{KaratzasZamfirescu2008}. 
In one aspect, our assumptions are slightly weaker than those of \cite{BayraktarKaratzasYao2010} since we only assume that the filtration is continuous instead of Brownian. 
But, more importantly, our methods are completely different from theirs. 
In \cite{leung.sircar.2009}, it is already stated that the optimal exercise time of an American claim
is given by the first time the nonlinear Snell envelope hits the payoff process. We think that Proposition~2.13 therein holds true, but we do not think that in its proof, 
Theorem~2.10 from Karatzas and Zamfirescu~\cite{KaratzasZamfirescu2005}, which deals with a best case optimal stopping problem that is positively homogeneous in the payoff, can be applied. 
On the other hand, Bayraktar and Yao~\cite{BayraktarYao2011-I, BayraktarYao2011-II} solve the optimal stopping problem for convex expectations by arguing with an up-crossing theorem 
for nonlinear expectations. Since the buyer's indifference value is concave in the random payoff, we cannot apply their results.  
Recently, Grigorova et al.~\cite{Grigorovaetal2017} have obtained results on optimal stopping under $g$-expectations for Lipschitz generators $g$ and for payoff processes 
only required to be optional (rather than c\`adl\`ag) with respect to a usual filtration generated by a Brownian motion and an independent Poisson random measure. 
We also cannot apply their results because the indifference valuation corresponds to a $g$-expectation with $g$ of quadratic growth.

We establish in \cref{thm:NonlinSnellNEW} the existence of a right-continuous ``Snell envelope'' corresponding to the American exponential 
utility indifference valuation. Crucial for the proof is both a continuity result from \cite{ManiaSchweizer05} (cf.\ property 7 in \cref{pro:IndiffValueProperties}) 
and its reverse, which we newly derive in the present article, cf.\ \cref{pro:ConvOfExpIMpliesConvInProb}.
The arguments in the proof of \cref{thm:NonlinSnellNEW} do not use an up-crossing theorem and hence could also be applied to more general nonlinear expectations. 
Recall that the classical up-crossing theorem states that every supermartingale (under a linear expectation) admits finite left and right limits over rationals 
(cf.\ e.g.\ Proposition~3.14 (i) in Chapter~1 of Karatzas and Shreve~\cite{KaratzasShreveBrownianMotion}). 
Our techniques are similar to the ones in the classical theory of optimal stopping under linear expectations with c\`adl\`ag payoff processes and filtrations satisfying the usual conditions 
(see e.g.\ Appendix D of Karatzas and Shreve~\cite{KaratzasShreveMathFi}). 
But, since we do not rely on the up-crossing theorem, that guarantees the existence of a right-limit 
process, we argue with a right-liminf process of the Snell envelope values at rational time points. It is not a priori clear that this process is right-continuous, so a critical part of 
our analysis is dedicated to verify this. First, we show that the process is progressively measurable and right continuous along stopping times. Then, an optional projection argument 
combined with the section theorem yields right-continuity up to evanescence. Our arguments rely on the continuity of the filtration at two places: first to identify 
(through the reverse continuity of the indifference operator, \cref{pro:ConvOfExpIMpliesConvInProb}) the optimal stopping time as the first time the payoff process meets the 
Snell envelope, and second to show that the defined right-liminf process is right-continuous along stopping times relying on the fact that every stopping time is predictable.

For supermartingales under nonlinear expectations, the up-crossing theorem holds if the (nonlinear) expectation of a sequence of nonnegative random variables  
explodes when the sequence tends pointwise to infinity on a set with positive probability (cf.\ hypothesis (H0) in \cite{BayraktarYao2011-I} and its use in the proof of the 
 nonlinear up-crossing Theorem 2.3 therein). 
For a European claim, the buyer's indifference value is a submartingale under the entropy minimizing martingale measure (EMMM) and thus the usual up-crossing theorem guarantees a c\`adl\`ag version 
(see Proposition 12 in \cite{ManiaSchweizer05} and also Theorem~3 in Bion-Nadal~\cite{Bion-Nadal2009}). 
For an American claim this is more delicate, as the price can decrease if the optimal execution time is missed.
Indeed, in general, the American indifference price is neither a sub- nor a supermartingale under the EMMM. 
In addition, although the American indifference price satisfies the supermartingale property with respect to the family of indifference valuation operators~$(\pi_t)_{t\in[0,T]}$,
the nonlinear expectation $\pi_0(\cdot)$ violates hypothesis (H0) of \cite{BayraktarYao2011-I} (see \cref{15.3.2017.1} for a 
counterexample), hence hindering a straightforward application of the (nonlinear) up-crossing theorem. 
However, though right continuity of the American indifference value is sufficient for our purpose in the proof of existence of a Nash equilibrium point for the GCC in \cref{thm:NEPforGamePb}, 
we complement \cref{thm:NonlinSnellNEW} by showing in \cref{rem:LeftLimitsSnellEnv} that the American indifference value is indeed c\`adl\`ag. 
This is achieved by writing the latter as a continuous function of the quotient of two supermartingales to which the up-crossing theorem for linear expectations can be applied. 

The following theorem is key in the construction of a NEP in \cref{thm:NEPforGamePb}. It provides existence and uniqueness of a right-continuous adapted process that dominates a given payoff process 
and firstly hits it at an optimal stopping time. In addition, this unique process only depends on the future payoff process
restricted to all events from which it is already known that they occur. 
These are the properties that are needed for dynamic programming, and it is natural to call the process the nonlinear Snell envelope with respect to  
the nonlinear expectation given by the European indifference valuation. The proof of the theorem is relegated to \cref{app:AppendixB}.
\begin{thm}[Snell envelope and optimal stopping]\label{thm:NonlinSnellNEW}
Let $\filt$ be continuous, $\alpha\in(0,\infty)$, $C\in L^\infty$, and let $L$ be a payoff process in $\mathcal{S}^\infty$. Then, there exists a right-continuous adapted process~$V$ with
\begin{equation}\label{7.1.2017.3}
V_t = \esssup_{\tau\in\mathcal{T}_t}\, \pi^{\alpha,C}_t(L_\tau),\ P\mbox{-a.s.\ for all\ }t\in[0,T]. 
\end{equation}
The process $V$ is unique up to evanescence (i.e., unique up to a global $P$-null set not depending on time) and possesses the following properties:
\begin{itemize}
\item[(i)] $\pi^{\alpha,C}_t(V_\tau)\le V_t$, $P$-a.s.\ for all
$t\in[0,T], \tau\in\mathcal{T}_t$.
\item[(ii)] If $L$ has no negative jumps, then 
$\hat{\tau}_t:=\inf\{u\ge t\ |\ V_u = L_u\}$ is a $[t,T]$-valued stopping time with 
$V_{\hat{\tau}_t}=L_{\hat{\tau}_t}$ $P$-a.s.\ and
$\pi^{\alpha,C}_0(L_{\hat{\tau}_t}) = \sup_{\tau\in\mathcal{T}_t}\, \pi^{\alpha,C}_0(L_\tau)$ for all $t\in[0,T]$. 
\item [(iii)] For two payoff processes $L^1, L^2$ with $L^1=L^2$ on 
$[\sigma,T]$, where $\sigma\in  \mathcal{T}_0$, one has  $V^1=V^2$ on $[\sigma,T]$ up to evanescence, for the associated processes $V^1, V^2$.
\item[(iv)] If $L=L_\sigma$ on $[\sigma,T]$, where $\sigma\in  \mathcal{T}_0$, then $V_\sigma=L_\sigma$, $P$-a.s.. 
\end{itemize}
\end{thm}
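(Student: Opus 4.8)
The plan is to build $V$ as the right-regularization of a nonlinear supermartingale family. For every stopping time $\tau\in\mathcal{T}_0$ set $\bar V_\tau:=\esssup_{\rho\in\mathcal{T}_\tau}\pi^{\alpha,C}_\tau(L_\rho)$, an $\F_\tau$-measurable random variable. Pasting stopping times on $\Lambda=\{\pi^{\alpha,C}_\tau(L_{\rho_1})\ge\pi^{\alpha,C}_\tau(L_{\rho_2})\}\in\F_\tau$ and invoking the local property (property~5 in \cref{pro:IndiffValueProperties}), the family $\{\pi^{\alpha,C}_\tau(L_\rho):\rho\in\mathcal{T}_\tau\}$ is directed upward, so $\bar V_\tau$ is an a.s.\ increasing limit of $\pi^{\alpha,C}_\tau(L_{\rho_n})$ along some $\rho_n\in\mathcal{T}_\tau$; moreover $\lVert\bar V_\tau\rVert_\infty\le\lVert L\rVert_{\mathcal{S}^\infty}$ by property~1 and $\bar V_\tau\ge\pi^{\alpha,C}_\tau(L_\tau)=L_\tau$ by replication cost preservation (property~4). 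The first genuine step is the nonlinear supermartingale inequality $\pi^{\alpha,C}_\tau(\bar V_\rho)\le\bar V_\tau$ for stopping times $\tau\le\rho$: since $\mathcal{T}_\rho\subset\mathcal{T}_\tau$, time consistency (property~6) gives $\pi^{\alpha,C}_\tau(\pi^{\alpha,C}_\rho(L_{\rho'}))=\pi^{\alpha,C}_\tau(L_{\rho'})\le\bar V_\tau$ for all $\rho'\in\mathcal{T}_\rho$, and applying $\pi^{\alpha,C}_\tau$ to an approximating sequence $\pi^{\alpha,C}_\rho(L_{\rho'_n})\uparrow\bar V_\rho$, which is bounded and converges in probability, property~7 (where the continuity of $\filt$ is used) lets one pass to the limit.

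Next, fix the countable dense set $\mathbb{D}:=(\mathbb{Q}\cap[0,T])\cup\{T\}$ and define
\[
V_t:=\liminf_{\mathbb{D}\ni q\downarrow t}\bar V_q\quad(t<T),\qquad V_T:=L_T .
\]
Right-continuity of $\filt$ makes each $V_t$ $\F_t$-measurable and a routine argument gives progressive measurability. The technical core — and the step I expect to be the main obstacle — is to show that $V$ is indistinguishable from a right-continuous process and that $V_t=\bar V_t$ $P$-a.s.\ for every $t$, which yields \cref{7.1.2017.3}. Following the scheme announced in the introduction, I would first show that $V$ is right-continuous along stopping times and that $V_\tau=\bar V_\tau$ a.s.\ for every stopping time $\tau$: for $\tau_n\downarrow\tau$ one obtains $V_{\tau_n}\to V_\tau$ (in probability, hence a.s.\ along a subsequence) by combining the nonlinear supermartingale inequality of step one with the right-continuity of $L$ and the continuity of $\pi^{\alpha,C}$ (property~7), crucially using that in a continuous filtration every stopping time is predictable, i.e.\ announceable from strictly below. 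An optional projection argument together with the section theorem then upgrades this to genuine right-continuity up to evanescence. Property~(i) is then immediate, since $\pi^{\alpha,C}_t(V_\tau)=\pi^{\alpha,C}_t(\bar V_\tau)\le\bar V_t=V_t$ by step one, and uniqueness up to evanescence holds because two right-continuous adapted processes satisfying \cref{7.1.2017.3} coincide on $\mathbb{D}$ off a $P$-null set, hence everywhere.

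The remaining items are comparatively routine except for the optimality in~(ii). For~(iv), if $L=L_\sigma$ on $[\sigma,T]$ then $\pi^{\alpha,C}_\sigma(L_\rho)=\pi^{\alpha,C}_\sigma(L_\sigma)=L_\sigma$ for every $\rho\in\mathcal{T}_\sigma$ by property~4, so $\bar V_\sigma=L_\sigma=V_\sigma$. For~(iii), if $L^1=L^2$ on $[\sigma,T]$ then for any stopping time $\tau\ge\sigma$ every $\rho\in\mathcal{T}_\tau$ has $L^1_\rho=L^2_\rho$, whence $\bar V^1_\tau=\bar V^2_\tau$; taking $\tau=q\vee\sigma$ for $q\in\mathbb{D}$ and using the local property gives $V^1=V^2$ on $[\sigma,T]$ up to evanescence. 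For~(ii), assume $L$ has no negative jumps. Since $V\ge L$ (from $V_t=\bar V_t\ge L_t$ and right-continuity) and $V-L\ge0$ is right-continuous, $\hat\tau_t=\inf\{u\ge t:V_u-L_u=0\}$ is a stopping time by the debut theorem and $V_{\hat\tau_t}=L_{\hat\tau_t}$ follows directly from right-continuity of $V-L$. The optimality $\pi^{\alpha,C}_0(L_{\hat\tau_t})=\sup_{\tau\in\mathcal{T}_t}\pi^{\alpha,C}_0(L_\tau)$ is the delicate point: I would approximate $\hat\tau_t$ from below by stopping times $\tau_n$ in the continuation region $\{V>L\}$, on which $V$ is shown to behave as a $\pi^{\alpha,C}$-martingale, and pass to the limit, using the no-negative-jumps assumption to get $L_{\tau_n}\to L_{\hat\tau_t}$ in probability together with the continuity of $\pi^{\alpha,C}_0$. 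Here \cref{pro:ConvOfExpIMpliesConvInProb} is the crucial tool: applied with $\eta^n=L_{\tau_n}-V_{\tau_n}\le0$, it converts the convergence $\pi^{\alpha,C}_0(V_{\tau_n}+\eta^n)-\pi^{\alpha,C}_0(V_{\tau_n})\to0$ of indifference values into $V_{\tau_n}-L_{\tau_n}\to0$ in probability, thereby pinning down $\hat\tau_t$ as the optimal exercise time.
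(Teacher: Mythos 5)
Your overall architecture does match the paper's: a right-liminf over rational times of the esssup family, a nonlinear supermartingale inequality, right-continuity along stopping times exploiting that in a continuous filtration every stopping time is predictable, an optional projection plus section theorem to get right-continuity up to evanescence, and \cref{pro:ConvOfExpIMpliesConvInProb} as the decisive new tool. Your step one and the reductions for (iii)--(iv) are sound (modulo the identity $V_\tau=\bar V_\tau$ that you use there). However, the two points you yourself flag as hard are exactly where the proof lives, and they are not supplied. For the identification $V_t=\bar V_t$, which you need already for \cref{7.1.2017.3} and for your (i), (iii), (iv), you give no mechanism; in the paper it follows from first proving $\pi^{\alpha,C}_0(\wt{V}_t)=\sup_{\tau\in\mathcal{T}_t}\pi^{\alpha,C}_0(L_\tau)$ and then invoking strict monotonicity of $\pi^{\alpha,C}_0$. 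For right-continuity along stopping times, ``combine the supermartingale inequality with right-continuity of $L$ and property 7'' is not yet an argument: the paper's Step~6 requires a genuine construction, namely, for $\tau_n\downarrow\tau$, building via announcing sequences and a Borel--Cantelli selection a \emph{rational-valued} stopping time $\sigma_m$ with $\tau<\sigma_m<D_m$, where $D_m$ is the first time after $\tau$ at which $L$ moves by more than $1/m$, then sandwiching $V_u\le \pi^{\alpha,C}_u(V_{\sigma_m})+2/m$ for $u\in[\tau,\sigma_m]$ and using right-continuity of the \emph{European} indifference value $\pi^{\alpha,C}_\cdot(V_{\sigma_m})$. None of this is recoverable from your sketch, so the ``technical core'' remains open.

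The more serious gap is the optimality in (ii), where your plan is circular. The claim that $V$ ``is shown to behave as a $\pi^{\alpha,C}$-martingale'' on the continuation region is precisely the missing content: the supermartingale property gives $\pi^{\alpha,C}_0(V_{\tau_n})\le \pi^{\alpha,C}_0(V_t)$, i.e.\ the wrong direction, and no argument for the reverse inequality is offered. Moreover, your application of \cref{pro:ConvOfExpIMpliesConvInProb} with $\eta^n=L_{\tau_n}-V_{\tau_n}$ presupposes its hypothesis, $\pi^{\alpha,C}_0(L_{\tau_n})-\pi^{\alpha,C}_0(V_{\tau_n})\to0$, which is essentially the optimality statement you are trying to prove; and if you instead take $\tau_n$ to be the debut of $\{L\ge V-1/n\}$, then $V_{\tau_n}-L_{\tau_n}\le 1/n$ holds trivially, so the proposition's conclusion is vacuous and yields no optimality. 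The paper applies the proposition in the opposite direction: for near-optimal $\sigma_n$ with $\pi^{\alpha,C}_0(L_{\sigma_n})\ge\pi^{\alpha,C}_0(\wt{V}_t)-1/n$, the $\eps$-gap $L\le\wt{V}-\eps$ strictly before $\tau^\eps_t:=\inf\{u\in[t,T]\cap\Q\,|\,L_u\ge\wt{V}_u-\eps\}$ together with the supermartingale property yields $\pi^{\alpha,C}_0(\wt{V}_{\sigma_n})-\pi^{\alpha,C}_0\bigl(\wt{V}_{\sigma_n}-\eps\mathds{1}_{\{\sigma_n<\tau^\eps_t\}}\bigr)\to0$, so the proposition (with $\eta^n=-\eps\mathds{1}_{\{\sigma_n<\tau^\eps_t\}}$) gives $P[\sigma_n<\tau^\eps_t]\to0$; from this, and from $V_s\ge\pi^{\alpha,C}_s(L_{\sigma_n})$ on $\{\sigma_n\ge s\}$ plus right-continuity of $\pi^{\alpha,C}_\cdot(L_{\sigma_n})$, one deduces $\pi^{\alpha,C}_0(\wt{V}_{\tau^\eps_t})\ge\pi^{\alpha,C}_0(\wt{V}_t)$ and hence $\pi^{\alpha,C}_0(L_{\tau^\eps_t})\ge\pi^{\alpha,C}_0(\wt{V}_t)-\eps$; only then does the no-negative-jumps assumption enter, to pass $\eps\downarrow0$ along $\tau^\star_t=\sup_{\eps>0}\tau^\eps_t$ and identify $\tau^\star_t$ with $\hat\tau_t$ via strict monotonicity. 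Without an argument of this type establishing $\pi^{\alpha,C}_0(V_{\tau^\eps_t})\ge\pi^{\alpha,C}_0(V_t)$, your proof of (ii) does not close.
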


\begin{rem}
Like in the linear case, the American option price is in general not a martingale but only a supermartingale. It is an easy exercise to show that (i) holds with equality for all $t\in[0,T]$ and $\tau\in\mathcal{T}_t$ if and only if 
$L_t\le \pi^{\alpha,C}_t(L_T),\ P\mbox{-a.s. for all\ }t\in[0,T]$, i.e., $T$ is an optimal stopping time.
\end{rem}

\begin{rem}\label{rem:LeftLimitsSnellEnv}
In the proof of \cref{thm:NonlinSnellNEW}, the up-crossing theorem is not required. Thus, the arguments hold for quite general nonlinear expectations. However,
for the American indifference value, which is in general neither a super- nor a submartingale (w.r.t. $P$ or $Q^E$), we show in the following that the up-crossing theorem can nevertheless be used to prove that the dynamic value admits finite left and right limits over rationals. The indifference value can be written as
\begin{equation}\label{18.7.2017.1}
\begin{split}
\esssup_{\tau\in\mathcal{T}_t}\, \pi^{\alpha,C}_t(L_\tau) 
& = 
\esssup_{\tau\in\mathcal{T}_t}\, 
\left(
- \frac{1}{\alpha}\log\left(\frac{\displaystyle\esssup_{\vartheta\in\Theta} E_t\left[-e^{-\alpha\big(C+L_\tau+\int_t^T\vartheta^{\mathrm{tr}}_sdS_s\big)}\right]}{\displaystyle\esssup_{\vartheta\in\Theta} E_t\left[-e^{-\alpha\big(C+\int_t^T\vartheta^{\mathrm{tr}}_sdS_s\big)}\right]}\right)
\right)\\
& =   - \frac{1}{\alpha}\log\left(\frac{\displaystyle\esssup_{(\vartheta,\tau)\in\Theta\times\mathcal{T}_t} E_t\left[-e^{-\alpha\big(C+L_\tau+\int_t^T\vartheta^{\mathrm{tr}}_sdS_s\big)}\right]}{\displaystyle\esssup_{\vartheta\in\Theta} E_t\left[-e^{-\alpha\big(C+\int_t^T\vartheta^{\mathrm{tr}}_sdS_s\big)}\right]}\right).
\end{split}
\end{equation}
Let us show that $A_t:={\rm esssup}_{(\vartheta,\tau)\in\Theta\times\mathcal{T}_t} E_t\Big[-e^{-\alpha\big(C+L_\tau+\int_t^T\vartheta^{\mathrm{tr}}_sdS_s\big)}\Big]$ satisfies the 
supermartingale property
$A_t\ge E_t[A_{t+h}]$,\ $P$-a.s.\ for all $t, t+h\in [0,T]$. 
Indeed, due to the choice of $\Theta$ in \cref{eq:DefStrategiesTheta} the set 
$\left\{E_{t+h}\Big[-e^{-\alpha\big(C+L_\tau+\int_{t+h}^T\vartheta^{\mathrm{tr}}_sdS_s\big)}\Big]\ \Big\lvert\ (\vartheta,\tau)\in\Theta\times\mathcal{T}_{t+h}\right\}$ 
is maximum-stable and thus
\begin{align*}
E_t[A_{t+h}] & =  
E_t\left[ \esssup_{(\vartheta,\tau)\in\Theta\times\mathcal{T}_{t+h}}
E_{t+h}\left[-e^{-\alpha\big(C+L_\tau+\int_{t+h}^T\vartheta^{\mathrm{tr}}_sdS_s\big)}\right]\right]\\
& =
\esssup_{(\vartheta,\tau)\in\Theta\times\mathcal{T}_{t+h}}
E_t\left[  E_{t+h}\left[-e^{-\alpha\big(C+L_\tau+\int_{t+h}^T\vartheta^{\mathrm{tr}}_sdS_s\big)}\right]\right]
\le A_t,\quad P\mbox{-a.s.}.
\end{align*}
Since the denominator in the last line of \cref{18.7.2017.1} coincides with $A_t$ for $L=0$, it also satisfies the supermartingale property.  
We conclude that there exists an event with full probability on which 
 for all $t\in\R_+$ the limits 
\begin{equation}\label{18.7.2017.2}
\lim_{\substack{s\to t\\ s<t,\, s\in\Q}} \esssup_{\tau\in\mathcal{T}_s}\, \pi^{\alpha,C}_s(L_\tau)
\quad\mbox{and}\quad
\lim_{\substack{s\to t\\ s>t,\, s\in\Q}} \esssup_{\tau\in\mathcal{T}_s}\, \pi^{\alpha,C}_s(L_\tau) \text{ exist and are finite.}
\end{equation}
This is because by applying the up-crossing theorem (see e.g.\ Proposition~3.14 (i) in Chapter~1 of \cite{KaratzasShreveBrownianMotion}) we have this property for both the 
numerator and the denominator in the last line of \cref{18.7.2017.1}, and the quotient being bigger than $\exp(-\alpha||L||_{\infty})$ is bounded away from zero. 
By \cref{18.7.2017.2}, it is immediate that the process \cref{12.1.2017.1}
in  the proof of \cref{thm:NonlinSnellNEW} possesses finite left limits
up to evanescence. Together with \cref{22.12.2016.1}, \cref{18.7.2017.2} also implies that \cref{12.1.2017.1} is right-continuous 
up to evanescence. Thus, we even have that the Snell envelope process $V$ of \cref{thm:NonlinSnellNEW} is c\`adl\`ag.
\end{rem}

\section{Examples}\label{14.6.2018.2}

This section provides some economic intuition behind equilibria 
in incomplete markets. The phenomena described in the following two examples cannot occur in complete markets, in which both players stop the contract with the aim to maximize their expected payoffs under the unique martingale measures, regardless of their risk aversions and random endowments. 
For simplicity, we consider the case that $S=1$, i.e., trading gains in the underlyings need not be considered. The payoff processes are not bounded, but 
the examples satisfy the assumptions in Hamad\`ene and Zhang~\cite{HamadeneZhang2010},
in which endowments can be considered by modifying the payoff processes in a straight forward way.

The first example, in which equilibria are unique, 
illustrates how the writer's optimal stopping time depends on her random endowment. 
\begin{example}[Impact of endowments on optimal stopping times]\label{9.5.2018.1}
Let $W$ be a standard Brownian motion w.r.t. the filtration~$\mathbb{F}$. Consider the GCC with payoff processes $X_t = W_t + \mu t$ and $Y_t = W_t + \mu t + \delta$, $\ t\in[0,T]$,\ $\mu,\delta\in\R_+$. The process $X$ can be interpreted as the value of a nontraded asset and $\delta$ as the penalty the seller has to pay if she recalls the GCC prematurely.
Assume that $\alpha_B/2 < \mu < \alpha_A /2$, 
$0<\delta < (\alpha_A /  2 +  \mu) T$, and $C_B=0$. Since for any $\sigma\in\mathcal{T}_0$, the process 
\beao
t\mapsto & & \ -\exp\left(-\alpha_B R(t,\sigma)\right)\\
& & = -\exp\left(-\alpha_B W_{t\wedge\sigma} - \frac{\alpha^2_B}{2} (t\wedge\sigma)\right)
\exp\left(\alpha_B\left(\frac{\alpha_B}{2}-\mu\right)(t\wedge\sigma) -\alpha_B \delta 1_{(t>\sigma)}\right)
\eeao 
is an (optional) submartingale, a dominant strategy for the buyer is to stop at maturity~$T$. This means that the drift is high enough to compensate the buyer for the inventory risk of the GCC. We now distinguish two cases for the endowment of the seller: 

Case 1: $C_A=0$. In this case, the seller has to solve the optimal stopping problem
\beam\label{2.4.2018.1}
\sup_{\sigma\in\mathcal{T}_0} E\left[-\exp(\alpha_A R(T,\sigma))\right].
\eeam
Applying the change of measure $d\wt{P}/dP=\exp(\alpha_A W_T- \alpha_A^2 T/2)$ yields
\beao
& & E\left[-\exp(\alpha_A R(T,\sigma))\right]\\
& & = E\left[-\exp\left(\alpha_A W_{\sigma} - \frac{\alpha^2_A \sigma}{2}\right)
\exp\left(\alpha_A\left(\frac{\alpha_A}{2}+\mu\right)\sigma +\alpha_A \delta 1_{(\sigma<T)}\right)\right]\\
& & = -E^{\wt{P}}\left[\exp\left(\alpha_A\left(\frac{\alpha_A}{2}+\mu\right)\sigma +\alpha_A \delta 1_{(\sigma<T)}\right)\right],
\eeao 
and the pathwise minimizer of the expression under the expectation $E^{\wt{P}}$ is given by $\sigma=0$. Consequently, $\sigma \equiv 0$ also solves (\ref{2.4.2018.1}), i.e., the seller recalls the contract 
immediately.

Case 2: $C_A=W_T+\mu T$, i.e., the seller holds a long position in the 
nontraded asset. Thus, she has to solve the problem
\beao
\sup_{\sigma\in\mathcal{T}_0} E\left[-\exp(-\alpha_A(W_T+\mu T-R(T,\sigma)))\right].
\eeao
After the change of measure $d\wt{P}/dP=\exp(-\alpha_A W_T-\alpha_A^2 T/2)$, the problem reads
\beao
c\,\sup_{\sigma\in\mathcal{T}_0} E^{\wt{P}}\left[-\exp(\alpha_A R(T,\sigma))\right]
=c\, \sup_{\sigma\in\mathcal{T}_0} E^{\wt{P}}\left[-\exp(\alpha_A(\wt{W}_\sigma-\alpha_A\sigma +\mu\sigma + \delta 1_{(\sigma<T)}))\right],
\eeao
where $\wt{W}_t := W_t + \alpha_A t,\ t\in[0,T],$ is a $\wt{P}$-standard Brownian motion by Girsanov's theorem and $c:=\exp(\alpha_A(\alpha_A/2 - \mu)T)$. By $\alpha_A (\mu -\alpha_A/2) <0$, the process
\beao
t\mapsto & & \ -\exp\left(\alpha_A \wt{W}_t - \alpha^2_A t +\alpha_A \mu t +\alpha_A \delta 1_{(t<T)}\right)
\eeao 
is a submartingale. This yields that the supremum is attained at $\sigma\equiv T$, i.e., the contract is settled at maturity. 

Summing up, without endowment, the seller recalls the claim immediately to reduce her risk. By contrast, if she holds a long position in the nontraded asset, a short position in the GCC is a perfect hedging instrument.  Thus, she tolerates the positive drift of the underlying and holds her short position in the GCC up to maturity. 
\end{example}
\begin{rem}\label{5.9.2018.1}
It is shown by Anthropelos and \v{Z}itkovi\'c~\cite{anthropelos.zitkovic} 
(see Remark~3.17 and Lemma~A.7 therein) that there is an,  up to replicable payoffs unique, ``mutually agreeable'' European claim given by  
$B^\star:=(\alpha_A C_A - \alpha_B C_B)/(\alpha_A+\alpha_B)$, which the buyer~$B$ purchases from the seller~$A$ and which leads to a Pareto-optimal allocation. An immediate consequence is that there is a mutual incentive of the agents to trade European claims if and only if $\alpha_A C_A - \alpha_B C_B$ is nonreplicable in the underlyings. 
After trading $B^\star$, the endowments of the buyer and the seller are given by $\alpha_A(C_A + C_B)/(\alpha_A+\alpha_B)$ and $\alpha_B(C_A + C_B)/(\alpha_A+\alpha_B)$, respectively, up to replicable payoffs.
The special feature of exponential utility is that by the constant absolute risk-aversion, the risky-sharing
does not depend on the distribution of the aggregate endowment~$C_A+C_B$ between the agents before trading. Consequently, the above mentioned Pareto-optimal allocation can be reached by a mutually agreeable  trade for {\em any} initial distribution of $C_A+C_B$ between the agents. 
%
%

Applied to Example~\ref{9.5.2018.1}, this yields that in Case~2, the agents would trade European claims and in Case~1 not.  This is reflected in the equilibria of the stopping game, in which the contract is canceled immediately in Case~1 and 
not before maturity in Case~2.
%
%
In Case~2 of Example~\ref{9.5.2018.1}, the Pareto-optimal claim purchased by the buyer is given by $\alpha_A W_T/(\alpha_A+\alpha_B)$ up to constants.
Of course, it is not surprising that the equilibrium payoff of the GCC does in general not reproduce the optimal claim in the model of \cite{anthropelos.zitkovic} in which the agents can trade arbitrary European claims.

%
%
Another consequence of \cite{anthropelos.zitkovic} is that if $\alpha_A C_A - \alpha_B C_B$ is replicable, then it is impossible that both agents profit from entering into the GCC contract. However, by $X\le Y$, i.e., because premature stopping is penalized, this does not necessarily imply that an existing contract would be canceled immediately by one of the players. In addition, it is obvious that the optimal stopping game does, in general, not simplify as in the special case of a complete financial market, in which both players maximize their expected payoffs under the unique martingale measure.
\end{rem}
The next example shows that the equilibrium values of the game are in general not unique. 
\begin{example}[Equilibrium values are not unique]\label{5.9.2018.2}
Let $W$ be again a standard Brownian motion.
Consider the payoff processes $X_t=W_t$ and $Y_t=W_t + \delta$ with $0<\delta<(\alpha_A/2)T$, $\alpha_B>0$, and $C_A=C_B=0$. Now, both 
risk-averse players have an incentive to stop as early as possible, but 
by the penalty, they would prefer that the other player stops first.
%
%
If the other player stops at $T$, the best response of both players is to stop at zero. This implies that both pairs
\beam\label{12.5.2018.3}
(0,T)\quad\mbox{and}\quad(T,0)\quad\mbox{are NEPs},
\eeam 
but with different values $(u_A,u_B)$. 
The iterated best response in (\ref{eq:ApproxNEPOdd})/(\ref{3.6.2018.1})
leads to the equilibrium~$(0,T)$. By symmetry, it is also possible to start with the best response of the seller which leads to $(T,0)$.

But, the game possesses also other NEPs with $P(\tau\wedge\sigma>0)=1$ that are not the outcome of the above mentioned iterations starting with $(T,T)$. 
The equilibria are based on a {\em partial} payment of the penalty~$\delta$. 
The idea is that the players ``toss a coin'' who has to stop first. But, since the 
filtration $\mathbb{F}$ is continuous, a simple coin toss at time zero 
cannot be embedded in the model. Thus, in the following, we approximate such a 
behavior, which leads to nontrivial NEPs. 
%
%

Let $\eps>0$ be small enough s.t. 
\beam\label{9.5.2018.2}
\delta<\frac{\alpha_A}{2}(T-\eps),
\eeam
\beam\label{12.5.2018.1}
\exp\left(\frac{\alpha^2_A \eps}{2}\right)
\le \frac{2\exp(\alpha_A\delta)}{1+\exp(\alpha_A\delta)},\quad
\mbox{and}\quad
\exp\left(\frac{\alpha^2_B \eps}{2}\right)
\le \frac2{1+\exp(-\alpha_B\delta)}.
\eeam  
Consider at first the pair 
\beam\label{9.5.2018.3}
\tau_0:=\eps 1_{\{W_\eps\ge 0\}}+T 1_{\{W_\eps < 0\}}\quad\mbox{and}\quad \sigma_0:=\eps 1_{\{W_\eps < 0\}} + T 1_{\{W_\eps\ge 0\}}.
\eeam
This means that the random variable~$W_\eps$ is used for the coin toss which decides who has to stop at $\eps$. The pair~$(\tau_0,\sigma_0)$ is not yet a NEP. Namely, by the tail probabilities of the normal distribution, it happens that a player improves her expected utility by stopping even before $\eps$ if the conditional probability that the other player stops is sufficiently small compared to the linear expected loss she suffers by the waiting time. 
Now, we apply the iterated best response from (\ref{eq:ApproxNEPOdd})/(\ref{3.6.2018.1}) to the stopping game restricted to the interval~$[0,\eps]$ with terminal payoff $W_\eps + \delta 1_{\{W_\eps < 0\}}$. It follows directly from the proof of Theorem~\ref{thm:NEPforGamePb} that the resulting limiting pair, denoted by 
$(\wt{\tau}^\star,\wt{\sigma}^\star)$, is a NEP of the modified game. Based on this pair, we define the $[0,T]$-valued stopping times
\beao
\tau^\star:=\wt{\tau}^\star 1_{(\wt{\tau}^\star<\eps)} + \eps 1_{(\wt{\tau}^\star=\eps,\ W_\eps\ge 0)} + T 1_{(\wt{\tau}^\star=\eps,\ W_\eps<0)}
\eeao
and
\beao
\sigma^\star:=\wt{\sigma}^\star 1_{(\wt{\sigma}^\star<\eps)} + \eps 1_{(\wt{\sigma}^\star=\eps,\ W_\eps< 0)} + T 1_{(\wt{\sigma}^\star=\eps,\ W_\eps\ge 0)}.
\eeao
First observe that $(\wt{\tau}^\star,\wt{\sigma}^\star)$ and $(\tau^\star,\sigma^\star)$ lead to the same payoffs in their respective games.  
Since in addition, by (\ref{9.5.2018.2}), $(\tau_0,\sigma_0)$ is a NEP of the corresponding game started at time~$\eps$, it follows that $(\tau^\star,\sigma^\star)$ is a NEP in the original game.
%
%

In addition, it can easily be seen that $P(\tau^\star=0)=P(\sigma^\star=0)=0$. 
Indeed, for the buyer, one obtains the estimate
\beam\label{11.5.2018.1}
u_B(\tau_0,\sigma^\star) & = & - E\left[\exp(-\alpha_B W_{\sigma^\star\wedge\eps} - \alpha_B \delta 1_{\{\sigma^\star< \tau_0\}})\right]\nonumber\\
& \ge & - E\left[\exp(-\alpha_B W_{\eps} - \alpha_B \delta 1_{\{\sigma^\star<\tau_0\}})\right]\nonumber\\
& \ge & - E\left[\exp(-\alpha_B W_{\eps} - \alpha_B \delta 1_{\{W_\eps< 0\}})\right]\nonumber\\
& = & - E\left[\exp(-\alpha_B W_{\eps})\left(1+(\exp(-\alpha_B \delta)-1)1_{\{W_\eps<0\}}\right)\right]\nonumber\\
& \ge & -E\left[\exp(-\alpha_B W_{\eps})\right]\left(1+(\exp(-\alpha_B \delta)-1)P(W_\eps<0)\right)\nonumber\\
& = & -\frac12 E\left[\exp\left(\frac{\alpha^2_B \eps}{2}\right)\right]\left(1+(\exp(-\alpha_B\delta)\right)\nonumber\\
& > & -1 = u_B(0,\sigma^\star).
\eeam
Here, the first inequality follows from the submartingale property of $\exp(-\alpha_B W)$ and the second inequality from $\{W_\eps< 0\}\subset \{\sigma^\star<\tau_0\}$.
The third inequality can be derived from Girsanov's theorem applied to the measure
$d\wt{P}/dP=\exp(-\alpha_B W_\eps-\alpha_B^2 \eps/2)$. The strict inequality holds by the choice of $\eps$ in (\ref{12.5.2018.1}). 
(\ref{11.5.2018.1}) implies that $\tau\equiv 0$ cannot be the best response to $\sigma^\star$ and thus $P(\tau^\star>0)>0$. If $\mathcal{F}_0$ is $P$-trivial, we are already done. Otherwise, the conditional version of (\ref{11.5.2018.1}) yields $P(\tau^\star=0)=0$.
Then, the same calculations yield $P(\sigma^\star=0)=0$. Here, we  
use that $u_A(\tau^\star,0) = -\exp(\alpha_A\delta)$ by $P(\tau^\star=0)=0$.
 
This means that
without any information on $W_\eps$, it cannot be optimal to stop at zero and pay the full penalty (or resign to get payed it).\\

On the other hand, it follows from Lemma~A.7 of Anthropelos and \v{Z}itkovi\'c~\cite{anthropelos.zitkovic}  that (\ref{12.5.2018.3}) are the only Pareto-optimal NEPs. Indeed, for stopping times $\tau,\sigma$, the payoff~$R(\tau,\sigma)=W_{\tau\wedge\sigma} + \delta 1_{\{\sigma<\tau\}}$ is deterministic if and only if $P(\tau=0)=1$ or $P(\sigma=0,\ \tau>0)=1$. But for a non-deterministic $R(\tau,\sigma)$,
the lemma tells that $\pi^B(R(\tau,\sigma)) + \pi^A(-R(\tau,\sigma))<0$. 
This means that the players may agree that the seller pays to the buyer the 
amount $\left[ \pi^B(R(\tau,\sigma)) - \pi^A(-R(\tau,\sigma)\right]/2$ as a compensation for exercising her claim. This would improve the expected utility of both players
compared to playing the game with $(\tau,\sigma)$.
\end{example}

\section{Proof of Theorem~\ref{thm:NEPforGamePb}}\label{sect:proofMainResult}
For $\pi^A$ and $\pi^B$ from \cref{26.7.2017.1},
define the functionals $J_A,J_B:\mathcal{T}_0\times \mathcal{T}_0\longrightarrow \R$ by 
\begin{equation*}
	J_B(\tau,\sigma) := \pi^B_0\big(R(\tau,\sigma)\big)\quad \text{and}\quad J_A(\tau,\sigma) := \pi^A_0\big(-R(\tau,\sigma)\big).
\end{equation*}
By \cref{pro:NEPinTermsofIndiffPrice}, a NEP of the game 
\cref{eq:Defnu1}/\cref{eq:Defnu2} is a pair $(\tau^*,\sigma^*)\in\mathcal{T}_0^2$ satisfying 
\begin{equation}\label{eq:NEPdefnJGeneral}
	J_B(\tau^*,\sigma^*) \ge J_B(\tau,\sigma^*)\quad \text{and}\quad J_A(\tau^*,\sigma^*) \ge J_A(\tau^*,\sigma),\quad \text{for all } \tau,\sigma\in\mathcal{T}_0. 
	\end{equation}
To prove the existence of a pair~$(\tau^*,\sigma^*)$ satisfying \cref{eq:NEPdefnJGeneral}, we follow the ideas of Hamad\`ene and 
Zhang~\cite{HamadeneZhang2010}.
But, to deal with the nonlinearity of the indifference valuation, we have to adjust the proof for linear expectations at various places, mainly by
applying \cref{pro:ConvOfExpIMpliesConvInProb,thm:NonlinSnellNEW}.
To receive readability, we repeat the main proof of  \cite{HamadeneZhang2010} while omitting only the parts which are one-to-one translations.

One first constructs best response strategies through two sequences of stopping 
times $(\tau_{2n+1})_{n\in\N_0}$, $(\tau_{2n+2})_{n\in\N_0}$ for the buyer and the seller, respectively, and shows that 
both sequences are nonincreasing. Finally, one shows that 
the limits $\tau^*_1$ of $\tau_{2n+1}$ and $\tau^*_2$ of $\tau_{2n+2}$ as $n$ tends to infinity define a NEP $(\tau^*_1,\tau^*_2)=(\tau^*,\sigma^*)$ satisfying \cref{eq:NEPdefnJGeneral}. Of course, the monotonicity of the sequences of stopping times is key, since otherwise the best response strategies may oscillate. 

Let $\tau_1:=\tau_2:=T$. Given $\tau_{2n-1},\tau_{2n}$ for some $n\in\N$, we want to construct
$\tau_{2n+1}$ as follows:
Consider the payoff process $L^{2n+1}\in\mathcal{S}^\infty$ defined for $t\in[0,T]$ by 
\begin{equation}\label{eq:PayoffProcessesOddEven}
L^{2n+1}_t := X_t\mathds{1}_{\{t<\tau_{2n}\}}+\big(X_T\mathds{1}_{\{\tau_{2n}=T\}}+Y_{\tau_{2n}}\mathds{1}_{\{\tau_{2n}<T\}}\big)\mathds{1}_{\{t\ge\tau_{2n}\}}. 
\end{equation}
Under \cref{eq:aspFiltContinuous,eq:XleY,eq:NoNegJumps}, \cref{thm:NonlinSnellNEW} can be applied to $L^{2n+1}$, and an optimal stopping time of 
$\sup_{\tau\in\mathcal{T}_0}\, \pi^B_0(L^{2n+1}_\tau)$ is given by 
\begin{equation}\label{eq:OptStoppingTimesOdd}
	\tilde{\tau}_{2n+1}:=\inf\{t\ge 0: V^{2n+1}_t=L^{2n+1}_t\} = \inf\{t\ge 0:V^{2n+1}_t=X_t\}\wedge \tau_{2n},
\end{equation}
where the $\pi^B$-Snell envelope~$V^{2n+1}$ is the unique right-continuous adapted process satisfying
\begin{equation}\label{eq:SnellEnvOdd}
	V^{2n+1}_t = \esssup_{\tau\in\mathcal{T}_t}\, \pi^B_t(L^{2n+1}_\tau),\quad P\mbox{-a.s.}\ t\in[0,T] 
\end{equation}
(the second equality in \cref{eq:OptStoppingTimesOdd} follows from  \cref{thm:NonlinSnellNEW}(iv)). 

Furthermore, we define  
\begin{equation}\label{eq:ApproxNEPOdd}
	\tau_{2n+1}:= \tilde{\tau}_{2n+1}\mathds{1}_{\{\tilde{\tau}_{2n+1}<\tau_{2n}\}} + \tau_{2n-1}\mathds{1}_{\{\tilde{\tau}_{2n+1}=\tau_{2n}\}}.
\end{equation}
\begin{rem}
The payoff process~$L^{2n+1}$ is chosen to be c\`adl\`ag. This comes at the price that $L^{2n+1}_\tau$ differs from $R(\tau,\tau_{2n})$ on the set~$\{\tau=\tau_{2n}<T\}$.
Thus, it is not yet clear that $\tilde{\tau}_{2n+1}$ is a best response strategy to $\tau_{2n}$. In addition, one takes $\tau_{2n+1}$ instead of
$\wt{\tau}_{2n+1}$. Yet, it is not even clear that $\tau_{2n+1}$ is a stopping time, and, a fortiori, that it is also a best response  strategy to $\tau_{2n}$.
\end{rem}
Given $\tau_{2n},\tau_{2n+1}$, the response~$\tau_{2n+2}$ of the seller~$A$ is defined in the same way by
\begin{equation}\label{3.6.2018.1}
	\tau_{2n+2}:= \tilde{\tau}_{2n+2}\mathds{1}_{\{\tilde{\tau}_{2n+2}<\tau_{2n+1}\}} + \tau_{2n}\mathds{1}_{\{\tilde{\tau}_{2n+2}=\tau_{2n+1}\}},
\end{equation}
where	$\tilde{\tau}_{2n+2}:=\inf\{t\ge 0:V^{2n+2}_t=L^{2n+2}_t\} = \inf\{t\ge 0:V^{2n+2}_t=-Y_t\}\wedge \tau_{2n+1}$
with payoff process
\begin{equation*}
L^{2n+2}_t:=-X_{\tau_{2n+1}}\mathds{1}_{\{t\ge\tau_{2n+1}\}} -Y_t\mathds{1}_{\{t<\tau_{2n+1}\}}
\end{equation*}
and $\pi^A$-Snell envelope $V^{2n+2}$ satisfying
\begin{equation*}
	V^{2n+2}_t = \esssup_{\tau\in\mathcal{T}_t}\, \pi^A_t(L^{2n+2}_\tau),\quad P\text{-a.s.}\ t\in[0,T].
\end{equation*}
\begin{lemma}\label{lem:PropSTimes} Assume \cref{eq:aspFiltContinuous,eq:XleY,eq:NoNegJumps}. Then 
	\begin{enumerate}
		\item For any $n\in\N$, $\tau_n$ is a stopping time and $\tau_{n+2}\le \tau_n$.
		\item On the event $\{\tau_{n+1}=\tau_n\}$, $n\in\N$, one has $\tau_m=T$ for all $m\le n$.
		\item For any $n\in\N$, $\{\tau_n<\tau_{n+1}\}\subset \{\tilde{\tau}_{n+2}\le \tau_n\}$ .
	\end{enumerate}
\end{lemma}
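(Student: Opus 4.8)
The plan is to prove the three assertions simultaneously by induction on $n$, following the recursive construction of Hamad\`ene and Zhang~\cite{HamadeneZhang2010} but substituting \cref{thm:NonlinSnellNEW} and the local property of the indifference operator (part~5 of \cref{pro:IndiffValueProperties}) for their use of the classical Snell envelope. Throughout I write $G^{n+1}$ for the obstacle of the $(n+1)$-st auxiliary problem ($G^{n+1}=X$ if $n+1$ is odd, $G^{n+1}=-Y$ if $n+1$ is even) and $\rho_{n+1}:=\inf\{t\ge0:V^{n+1}_t=G^{n+1}_t\}$, so that $\tilde\tau_{n+1}=\rho_{n+1}\wedge\tau_n$ (cf.\ \cref{eq:OptStoppingTimesOdd}/\cref{3.6.2018.1}, using \cref{thm:NonlinSnellNEW}(iv)). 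A first point to record is that each $L^{n+1}$ lies in $\mathcal{S}^\infty$ and has only nonnegative jumps: for $t<\tau_n$ it equals $X$ (resp.\ $-Y$), which has nonnegative jumps by \cref{eq:NoNegJumps}, and its single extra jump, at the reset time $\tau_n$, is $Y_{\tau_n}-X_{\tau_n-}\ge X_{\tau_n}-X_{\tau_n-}\ge0$ (resp.\ $Y_{\tau_n-}-X_{\tau_n}\ge Y_{\tau_n}-X_{\tau_n}\ge0$) by \cref{eq:XleY,eq:NoNegJumps}. Hence, as soon as $\tau_n$ is known to be a stopping time, \cref{thm:NonlinSnellNEW}(ii) applies to $L^{n+1}$ and $\tilde\tau_{n+1}$ is a genuine $[0,T]$-valued stopping time with $V^{n+1}_{\tilde\tau_{n+1}}=L^{n+1}_{\tilde\tau_{n+1}}$.

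Assertion~2 I would obtain from a downward cascade that uses no measurability. Since $\tilde\tau_k\le\tau_{k-1}$ always, on $\{\tau_k=\tau_{k-1}\}$ one cannot be on $\{\tilde\tau_k<\tau_{k-1}\}$ (there $\tau_k=\tilde\tau_k<\tau_{k-1}$), so one is on $\{\tilde\tau_k=\tau_{k-1}\}$, where $\tau_k=\tau_{k-2}$ by \cref{eq:ApproxNEPOdd}/\cref{3.6.2018.1}; thus $\{\tau_k=\tau_{k-1}\}\subset\{\tau_{k-1}=\tau_{k-2}\}$ with $\tau_k=\tau_{k-2}$ there, and iterating down from $k=n+1$ to $k=3$ yields $\tau_{n+1}=\tau_n=\dots=\tau_1=T$ on $\{\tau_{n+1}=\tau_n\}$. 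For the measurability and monotonicity in assertion~1 I would argue inductively: once assertion~3 holds at level $n-1$ one has $\tilde\tau_{n+1}\le\tau_{n-1}$ a.s.\ (on $\{\tau_{n-1}<\tau_n\}$ this is exactly the inclusion of assertion~3 at level $n-1$, and on the complement $\tilde\tau_{n+1}\le\tau_n\le\tau_{n-1}$); this forces $\tau_{n+1}\le\tau_{n-1}$ because $\tau_{n+1}\in\{\tilde\tau_{n+1},\tau_{n-1}\}$, and, with $B:=\{\tilde\tau_{n+1}<\tau_n\}\in\F_{\tilde\tau_{n+1}}$, it gives $\{\tau_{n+1}\le t\}=(B\cap\{\tilde\tau_{n+1}\le t\})\cup(B^c\cap\{\tilde\tau_{n+1}\le t\}\cap\{\tau_{n-1}\le t\})\in\F_t$, so $\tau_{n+1}$ is a stopping time ($\tau_1=\tau_2=T$ serving as base). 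The engine of the induction is thus the cycle: $\tau_n$ a stopping time $\Rightarrow$ assertion~3 at level $n-1$ $\Rightarrow$ $\tau_{n+1}$ a stopping time $\Rightarrow$ assertion~3 at level $n$.

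The substance is assertion~3, proved by induction using assertion~3 two steps back (the cases $n=1,2$ being vacuous, as $\{\tau_1<\tau_2\}=\{\tau_2<\tau_3\}=\emptyset$). Fix $n\ge3$ and put $\Lambda:=\{\tau_n<\tau_{n+1}\}$, which lies in $\F_{\tau_n}$. On $\Lambda$ one cannot have $\tilde\tau_{n+1}<\tau_n$, so $\tilde\tau_{n+1}=\tau_n$, hence $\tau_{n+1}=\tau_{n-1}$, and in particular $\tau_n<\tau_{n-1}$ there. If $\tilde\tau_n=\tau_{n-1}$ on a subset of $\Lambda$, then $\tau_n=\tau_{n-2}$ there by \cref{eq:ApproxNEPOdd}/\cref{3.6.2018.1}, so $\tau_{n-2}<\tau_{n-1}$, and assertion~3 at level $n-2$ gives $\tilde\tau_n\le\tau_{n-2}=\tau_n<\tau_{n-1}$, a contradiction; hence $\tilde\tau_n<\tau_{n-1}$ on $\Lambda$, so $\tau_n=\tilde\tau_n=\rho_n$ there, i.e.\ (by right-continuity of $V^n$ and of $G^n$) $V^n_{\tau_n}=G^n_{\tau_n}$ on $\Lambda$. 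Now on $\Lambda$ the processes $L^{n+2}$ and $L^n$ coincide path by path — both being reset at the level and time $\tau_{n-1}=\tau_{n+1}$ — while $G^{n+2}=G^n$ and $\pi^{n+2}=\pi^n$; combining the local property of $\pi$ on the $\F_{\tau_n}$-set $\Lambda$ with the Snell-envelope identity $V_{\tau_n}=\esssup_{\sigma\in\mathcal{T}_{\tau_n}}\pi_{\tau_n}(L_\sigma)$ from \cref{thm:NonlinSnellNEW} and the domination $V^n\ge L^n$, one obtains $\mathds{1}_\Lambda V^{n+2}_{\tau_n}=\mathds{1}_\Lambda V^n_{\tau_n}=\mathds{1}_\Lambda G^{n+2}_{\tau_n}$. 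Therefore $\rho_{n+2}\le\tau_n$ on $\Lambda$ and $\tilde\tau_{n+2}=\rho_{n+2}\wedge\tau_{n+1}\le\tau_n$ on $\Lambda$, which is assertion~3 at level $n$.

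The step I expect to be the main obstacle is precisely this last transfer within assertion~3: propagating ``$V^n$ meets its obstacle by time $\tau_n$'' to the later envelope $V^{n+2}$. It forces one to interleave the purely combinatorial bookkeeping of the reset rule \cref{eq:ApproxNEPOdd}/\cref{3.6.2018.1} (which, through assertion~3 at level $n-2$, is what pins one inside $\{\tilde\tau_n<\tau_{n-1}\}$ on $\Lambda$) with a localization of the nonlinear Snell envelope on an event that is only $\F_{\tau_n}$-measurable, for which the only available tools are the local property of $\pi$ in \cref{pro:IndiffValueProperties} and the Snell-envelope properties of \cref{thm:NonlinSnellNEW}; in particular one should be attentive to the fact that $\rho_n=\tau_n<T$ genuinely delivers $V^n_{\tau_n}=G^n_{\tau_n}$, which relies on right-continuity of $V^n$ and of the obstacle.
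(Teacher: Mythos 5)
Your argument is correct and is essentially the paper's: the paper proves this lemma by declaring it a one-to-one transcription of Lemmas~3.1 and 3.2 of Hamad\`ene and Zhang~\cite{HamadeneZhang2010}, with the linear Snell envelope in their (3.8) replaced by the nonlinear one from \cref{thm:NonlinSnellNEW}, which is exactly the template you have written out (including the verification that each $L^{n+1}$ has only nonnegative jumps, so that \cref{thm:NonlinSnellNEW}(ii) applies). The only divergence is at the identification $\mathds{1}_\Lambda V^{n+2}_{\tau_n}=\mathds{1}_\Lambda V^{n}_{\tau_n}$: you justify it via the local property of $\pi$ together with the representation $V_{\tau_n}=\esssup_{\sigma\in\mathcal{T}_{\tau_n}}\pi_{\tau_n}(L_\sigma)$ at the \emph{stopping time} $\tau_n$, which \cref{thm:NonlinSnellNEW} literally states only for deterministic times, whereas the paper invokes property (iii) of that theorem, formulated on stochastic intervals $[\sigma,T]$ and proved (Step~8 of its proof) by precisely the kind of event-localization you carry out by hand --- so either route needs, and admits, the same small extension.
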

\begin{proof}
The assertions are the same as in Lemmas~3.1 and 3.2 in \cite{HamadeneZhang2010}, and the proof is  one-to-one. Only in (3.8) of \cite{HamadeneZhang2010}, the linear
Snell envelope has to be replaced by the nonlinear Snell envelope from \cref{thm:NonlinSnellNEW}. Here, we need properties~(iii) and, again, (ii)  
of \cref{thm:NonlinSnellNEW}.
\end{proof}
The following lemma shows that $\tau_{2n+1},\tau_{2n+2}$ are indeed best
responses. Though its proof is analogous to that of Lemma~3.3 in \cite{HamadeneZhang2010}, we provide it in \cref{app:AppendixB} for the reader's convenience since adjustments are required at many places.
\begin{lemma}\label{lem:IneqJitauAndtaun}
 Assume \cref{eq:aspFiltContinuous,eq:XleY,eq:NoNegJumps}. Then for any $\tau\in \mathcal{T}_0$ and $n\in\N$, it holds that
 \begin{equation}\label{eq:ApproximateNEP}
 	J_B(\tau,\tau_{2n})\le J_B(\tau_{2n+1},\tau_{2n})\quad \text{and}\quad J_A(\tau_{2n+1},\tau)\le J_A(\tau_{2n+1},\tau_{2n+2}).
 \end{equation}
\end{lemma}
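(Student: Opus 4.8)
The plan is to reduce both inequalities in \cref{eq:ApproximateNEP} to the optimal-stopping assertion \cref{thm:NonlinSnellNEW}(ii), applied to the c\`adl\`ag surrogate payoff processes $L^{2n+1}$ and $L^{2n+2}$, exactly as in Lemma~3.3 of Hamad\`ene and Zhang~\cite{HamadeneZhang2010}. The two places where the linear argument of \cite{HamadeneZhang2010} has to be modified are the replacement of the classical Snell theorem by \cref{thm:NonlinSnellNEW}(ii) and of monotonicity of conditional expectation by monotonicity of $\pi^B$, resp.\ $\pi^A$ (property 2 of \cref{pro:IndiffValueProperties}). Since attainment of the optimal stopping time is used, a preliminary point is to check that $L^{2n+1}$ and $L^{2n+2}$ have no negative jumps: for $L^{2n+1}$, on $[0,\tau_{2n})$ it coincides with $X$ (nonnegative jumps by \cref{eq:NoNegJumps}), at $\tau_{2n}$ it jumps from $X_{\tau_{2n}-}\le X_{\tau_{2n}}\le Y_{\tau_{2n}}$ up to its terminal value, and it is constant afterwards; for $L^{2n+2}$, on $[0,\tau_{2n+1})$ it equals $-Y$ (nonpositive jumps of $Y$ become nonnegative jumps of $-Y$), at $\tau_{2n+1}$ it jumps from $-Y_{\tau_{2n+1}-}\le -Y_{\tau_{2n+1}}\le -X_{\tau_{2n+1}}$ up to $-X_{\tau_{2n+1}}$, and it is constant afterwards.

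For the first inequality, I would first establish the pathwise bound $L^{2n+1}_\tau\ge R(\tau,\tau_{2n})$ for every $\tau\in\mathcal{T}_0$, distinguishing the events $\{\tau<\tau_{2n}\}$ (both sides equal $X_\tau$), $\{\tau>\tau_{2n}\}$ (both sides equal $Y_{\tau_{2n}}$, noting $\tau_{2n}<T$ there) and $\{\tau=\tau_{2n}\}$ (here $R(\tau,\tau_{2n})=X_{\tau_{2n}}$, whereas $L^{2n+1}_\tau$ equals $Y_{\tau_{2n}}$ or $X_T$ according to whether $\tau_{2n}<T$ or $\tau_{2n}=T$, and $X\le Y$). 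Using property 2 of \cref{pro:IndiffValueProperties} and then \cref{thm:NonlinSnellNEW}(ii) with $t=0$ (recall $\tilde\tau_{2n+1}=\inf\{t\ge0:V^{2n+1}_t=L^{2n+1}_t\}$), this gives, for all $\tau\in\mathcal{T}_0$, the chain $J_B(\tau,\tau_{2n})=\pi^B_0(R(\tau,\tau_{2n}))\le\pi^B_0(L^{2n+1}_\tau)\le\sup_{\tau'\in\mathcal{T}_0}\pi^B_0(L^{2n+1}_{\tau'})=\pi^B_0(L^{2n+1}_{\tilde\tau_{2n+1}})$. The crux is then the pathwise identity $R(\tau_{2n+1},\tau_{2n})=L^{2n+1}_{\tilde\tau_{2n+1}}$, which shows this upper bound is attained by $\tau_{2n+1}$. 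On $\{\tilde\tau_{2n+1}<\tau_{2n}\}$ it is immediate from \cref{eq:ApproxNEPOdd} since there $\tau_{2n+1}=\tilde\tau_{2n+1}$ and both sides equal $X_{\tilde\tau_{2n+1}}$. On $\{\tilde\tau_{2n+1}=\tau_{2n}\}$ one has $\tau_{2n+1}=\tau_{2n-1}$ by \cref{eq:ApproxNEPOdd}; here \cref{lem:PropSTimes}(3) forces $\tau_{2n-1}\ge\tau_{2n}$, while \cref{lem:PropSTimes}(2) rules out $\tau_{2n-1}=\tau_{2n}<T$, so that either $\tau_{2n-1}>\tau_{2n}$ with $\tau_{2n}<T$, whence $R(\tau_{2n-1},\tau_{2n})=Y_{\tau_{2n}}=L^{2n+1}_{\tau_{2n}}$, or $\tau_{2n-1}=\tau_{2n}=T$, whence both sides equal $X_T$. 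Chaining the display with this identity yields $J_B(\tau,\tau_{2n})\le J_B(\tau_{2n+1},\tau_{2n})$.

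The second inequality is the mirror argument for the seller, and is in fact slightly simpler: since ties are resolved in favour of the buyer, one has the exact pathwise identity $-R(\tau_{2n+1},\sigma)=L^{2n+2}_\sigma$ for all $\sigma\in\mathcal{T}_0$ (check the regimes $\sigma<\tau_{2n+1}$, $\sigma=\tau_{2n+1}$, $\sigma>\tau_{2n+1}$), so no monotonicity step is needed. \cref{thm:NonlinSnellNEW}(ii) gives $\pi^A_0(L^{2n+2}_{\tilde\tau_{2n+2}})=\sup_{\sigma\in\mathcal{T}_0}\pi^A_0(L^{2n+2}_\sigma)=\sup_{\sigma\in\mathcal{T}_0}J_A(\tau_{2n+1},\sigma)$, so it remains to verify the pathwise identity $L^{2n+2}_{\tau_{2n+2}}=L^{2n+2}_{\tilde\tau_{2n+2}}$: on $\{\tilde\tau_{2n+2}<\tau_{2n+1}\}$ it is trivial since $\tau_{2n+2}=\tilde\tau_{2n+2}$ by \cref{3.6.2018.1}, and on $\{\tilde\tau_{2n+2}=\tau_{2n+1}\}$ one has $\tau_{2n+2}=\tau_{2n}$ while \cref{lem:PropSTimes}(3) forces $\tau_{2n}\ge\tau_{2n+1}$, so $L^{2n+2}_{\tau_{2n}}=-X_{\tau_{2n+1}}=L^{2n+2}_{\tau_{2n+1}}=L^{2n+2}_{\tilde\tau_{2n+2}}$. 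Hence $J_A(\tau_{2n+1},\tau_{2n+2})=\pi^A_0(-R(\tau_{2n+1},\tau_{2n+2}))=\pi^A_0(L^{2n+2}_{\tilde\tau_{2n+2}})=\sup_{\sigma\in\mathcal{T}_0}J_A(\tau_{2n+1},\sigma)\ge J_A(\tau_{2n+1},\tau)$ for all $\tau\in\mathcal{T}_0$.

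I expect the main obstacle to be the bookkeeping in the two pathwise identities $R(\tau_{2n+1},\tau_{2n})=L^{2n+1}_{\tilde\tau_{2n+1}}$ and $L^{2n+2}_{\tau_{2n+2}}=L^{2n+2}_{\tilde\tau_{2n+2}}$: the surrogates $L^{2n+1},L^{2n+2}$ are deliberately chosen to be c\`adl\`ag and to over-estimate the genuine GCC payoff on the tie sets, and one must check that replacing the raw hitting times $\tilde\tau_{2n+1},\tilde\tau_{2n+2}$ by the corrected stopping times $\tau_{2n+1},\tau_{2n+2}$ of \cref{eq:ApproxNEPOdd}/\cref{3.6.2018.1} turns that over-estimate into an exactly realised payoff. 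This relies on the precise interplay of the recursion with parts (2) and (3) of \cref{lem:PropSTimes}, and needs care on the overlapping events, in particular in separating $\{\tau_{2n}=T\}$ from $\{\tau_{2n}<T\}$. The remaining ingredients --- the pathwise comparison $L^{2n+1}_\tau\ge R(\tau,\tau_{2n})$ and the no-negative-jump verification --- are routine.
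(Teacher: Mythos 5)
Your proof is correct and follows essentially the same route as the paper's: both inequalities are reduced to \cref{thm:NonlinSnellNEW}(ii) applied to the surrogate payoffs $L^{2n+1}$ and $L^{2n+2}$, with parts (2) and (3) of \cref{lem:PropSTimes} handling the tie events $\{\tilde\tau_{2n+1}=\tau_{2n}\}$ and $\{\tilde\tau_{2n+2}=\tau_{2n+1}\}$ exactly as in the paper. The only (harmless) variation is that for the upper bound on $J_B(\tau,\tau_{2n})$ you use the pathwise domination $R(\tau,\tau_{2n})\le L^{2n+1}_\tau$ together with the trivial bound by the supremum, whereas the paper passes through \cref{thm:NonlinSnellNEW}(iv) and the supermartingale property (i) of $V^{2n+1}$; both yield the same bound $V^{2n+1}_0$.
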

By the monotonicity from \cref{lem:PropSTimes}(i), 
the best response strategies possess pointwise limits
\[
\tau^*_1:=\lim_{n\to\infty}\tau_{2n+1}\quad\mbox{and}\quad \tau^*_2:=\lim_{n\to\infty} \tau_{2n}
\]
that are of course again stopping times. To prove that $(\tau^*_1,\tau^*_2)$ is a NEP,
it only remains to show that the operations of taking limits and applying the indifference value operator can be interchanged.
The latter is done in the following two lemmas, which are proven in \cref{app:AppendixB}, and used
in  the proof of \cref{thm:NEPforGamePb} that we provide directly thereafter.
\begin{lemma}\label{lem:1stEqTowardsNEP}
	Assume \cref{eq:aspFiltContinuous,eq:XleY,eq:NoNegJumps}. Then for any $\tau\in\mathcal{T}_0$, it holds that
	\begin{enumerate}
		\item $J_B(\tau,\tau_{2n})\longrightarrow J_B(\tau,\tau^*_2)$, as $n\to\infty$.
		\item $P[\tau=\tau^*_1<T]=0$ implies $J_A(\tau_{2n+1},\tau)\longrightarrow J_A(\tau^*_1,\tau)$, as $n\to\infty$.
	\end{enumerate}
\end{lemma}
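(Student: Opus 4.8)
The plan is to deduce both convergences from the continuity property of the indifference operator (property~7 of \cref{pro:IndiffValueProperties}, available since $\filt$ is continuous by \cref{eq:aspFiltContinuous}), once one has established pointwise convergence of the underlying GCC payoffs. Recall from \cref{lem:PropSTimes}(1) that $(\tau_{2n})_{n}$ and $(\tau_{2n+1})_{n}$ are nonincreasing, so $\tau_{2n}\downarrow\tau^*_2$ and $\tau_{2n+1}\downarrow\tau^*_1$ $P$-a.s., and all of $R(\tau,\tau_{2n})$, $R(\tau_{2n+1},\tau)$ are bounded in $L^\infty$ by $\lVert X\rVert_{\mathcal{S}^\infty}\vee\lVert Y\rVert_{\mathcal{S}^\infty}$. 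Hence it suffices to prove $R(\tau,\tau_{2n})\to R(\tau,\tau^*_2)$ and $R(\tau_{2n+1},\tau)\to R(\tau^*_1,\tau)$ $P$-a.s., and then push this through $\pi^B_0(\cdot)$, resp.\ $\pi^A_0(-\cdot)$, via property~7.

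For part~(1), I would argue that $\sigma\mapsto R(\tau,\sigma)(\omega)$ is right-continuous: it equals the constant $X_\tau(\omega)$ on $\{\sigma\ge\tau(\omega)\}$ and equals $Y_\sigma(\omega)$, a right-continuous function of $\sigma$, on $\{\sigma<\tau(\omega)\}$. Concretely, on $\{\tau\le\tau^*_2\}$ one has $\tau\le\tau^*_2\le\tau_{2n}$ and thus $R(\tau,\tau_{2n})=X_\tau=R(\tau,\tau^*_2)$ for every $n$, while on $\{\tau>\tau^*_2\}$ one has $\tau_{2n}<\tau$ for $n$ large, so $R(\tau,\tau_{2n})=Y_{\tau_{2n}}\to Y_{\tau^*_2}=R(\tau,\tau^*_2)$ by right-continuity of $Y$. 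Property~7 then gives $J_B(\tau,\tau_{2n})=\pi^B_0(R(\tau,\tau_{2n}))\to\pi^B_0(R(\tau,\tau^*_2))=J_B(\tau,\tau^*_2)$. No hypothesis on $\tau$ is needed here, because the second coordinate $\tau_{2n}$ approaches its limit from above and hence stays in the region where $R(\tau,\cdot)$ is constant.

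For part~(2), the roles are not symmetric: $\tau'\mapsto R(\tau',\tau)(\omega)$ equals $X_{\tau'}(\omega)$ on $\{\tau'\le\tau(\omega)\}$ and the constant $Y_\tau(\omega)$ on $\{\tau'>\tau(\omega)\}$, so it is right-continuous in $\tau'$ except possibly at the point $\tau'=\tau(\omega)$, where its right limit $Y_\tau$ may strictly exceed its value $X_\tau$. On $\{\tau^*_1<\tau\}$ one has $\tau_{2n+1}<\tau$ for large $n$, hence $R(\tau_{2n+1},\tau)=X_{\tau_{2n+1}}\to X_{\tau^*_1}=R(\tau^*_1,\tau)$ by right-continuity of $X$; on $\{\tau^*_1>\tau\}$ one has $\tau<\tau^*_1\le\tau_{2n+1}$, so $R(\tau_{2n+1},\tau)=Y_\tau=R(\tau^*_1,\tau)$ for all $n$. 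On $\{\tau^*_1=\tau\}$, the subset $\{\tau^*_1=\tau<T\}$ is $P$-null by hypothesis, and on $\{\tau^*_1=\tau=T\}$ one has $T=\tau^*_1\le\tau_{2n+1}\le T$, hence $\tau_{2n+1}=T=\tau$ and $R(\tau_{2n+1},\tau)=X_T=R(\tau^*_1,\tau)$. Thus $R(\tau_{2n+1},\tau)\to R(\tau^*_1,\tau)$ $P$-a.s., and applying property~7 to the bounded sequence $-R(\tau_{2n+1},\tau)\to-R(\tau^*_1,\tau)$ yields $J_A(\tau_{2n+1},\tau)\to J_A(\tau^*_1,\tau)$. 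The main obstacle, and the only place the extra hypothesis is used, is precisely the diagonal set $\{\tau=\tau^*_1\}$: this is where the jump of $R(\cdot,\tau)$ at $\tau'=\tau$ could break convergence, and $P[\tau=\tau^*_1<T]=0$ is exactly what rules this out, the boundary case $\tau=\tau^*_1=T$ being harmless. As a final bookkeeping point, property~7 delivers the convergences in probability, which upgrade to $L^1$ by boundedness; when $\F_0$ is $P$-trivial these are the asserted convergences of real numbers, and otherwise one reads $J_A,J_B$ as $L^\infty(\F_0)$-valued, which is the form needed in the proof of \cref{thm:NEPforGamePb}.
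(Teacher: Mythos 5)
Your proof is correct and takes essentially the same route as the paper's: use the monotone limits $\tau_{2n}\downarrow\tau^*_2$, $\tau_{2n+1}\downarrow\tau^*_1$ from \cref{lem:PropSTimes}, establish $P$-a.s.\ convergence of the uniformly bounded payoffs via the right-continuity of $X$ and $Y$, and then apply the continuity property~7 of \cref{pro:IndiffValueProperties} to $\pi^B_0$, resp.\ $\pi^A_0$. The only difference is cosmetic: in part~(2) the paper argues through the right-limit process $t\mapsto R(t+,\tau)$ and then invokes $P[\tau=\tau^*_1<T]=0$ to identify the limit, whereas you do a direct case analysis; your explicit handling of the boundary event $\{\tau=\tau^*_1=T\}$ (where $\tau_{2n+1}\equiv T$ forces the payoff $X_T$) is, if anything, slightly more careful than the paper's right-limit shorthand.
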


\begin{lemma}\label{lem:2ndEqTowardsNEP}
	Assume \cref{eq:aspFiltContinuous,eq:XleY,eq:NoNegJumps}. Then, 
	\begin{enumerate}
		\item $J_B(\tau_{2n+1},\tau_{2n})\longrightarrow J_B(\tau^*_1,\tau^*_2)$, as $n\to\infty$.
		\item $J_A(\tau_{2n+1},\tau_{2n+2})\longrightarrow J_A(\tau^*_1,\tau^*_2)$, as $n\to\infty$.
	\end{enumerate}
\end{lemma}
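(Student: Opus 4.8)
The plan is to deduce both convergences from the continuity of the exponential indifference operator, part~7 of \cref{pro:IndiffValueProperties}, which applies since $\filt$ is continuous. Indeed, all the random variables $R(\tau_{2n+1},\tau_{2n})$ and $-R(\tau_{2n+1},\tau_{2n+2})$ lie in the $L^\infty$-ball of radius $\lVert X\rVert_{\mathcal{S}^\infty}\vee\lVert Y\rVert_{\mathcal{S}^\infty}$, so once I show $R(\tau_{2n+1},\tau_{2n})\to R(\tau^*_1,\tau^*_2)$ in probability, part~7 gives $J_B(\tau_{2n+1},\tau_{2n})=\pi^B_0\big(R(\tau_{2n+1},\tau_{2n})\big)\to\pi^B_0\big(R(\tau^*_1,\tau^*_2)\big)=J_B(\tau^*_1,\tau^*_2)$. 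Statement~2 is the mirror image of statement~1 under the exchange of buyer and seller and of $(X,\pi^B)$ with $(-Y,\pi^A)$ --- note that \cref{eq:NoNegJumps} is self-dual ($-Y$ has only nonnegative jumps when $Y$ has only nonpositive ones), and that $L^{2n+2}$ plays for the seller the role of $L^{2n+1}$ for the buyer --- so I concentrate on statement~1.

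\emph{Convergence of the payoffs off the diagonal.} By \cref{lem:PropSTimes}(1) we have $\tau_{2n+1}\downarrow\tau^*_1$ and $\tau_{2n}\downarrow\tau^*_2$, and since $X,Y$ are c\`adl\`ag, $X_{\tau_{2n+1}}\to X_{\tau^*_1}$ and $Y_{\tau_{2n}}\to Y_{\tau^*_2}$ $P$-a.s.\ (right limits along decreasing sequences). On $\{\tau^*_1<\tau^*_2\}$ one has $\tau_{2n+1}<\tau_{2n}$ for all large $n$, hence $R(\tau_{2n+1},\tau_{2n})=X_{\tau_{2n+1}}\to X_{\tau^*_1}=R(\tau^*_1,\tau^*_2)$; on $\{\tau^*_1>\tau^*_2\}$ one has $\tau_{2n}<\tau_{2n+1}$ eventually, so $R(\tau_{2n+1},\tau_{2n})=Y_{\tau_{2n}}\to Y_{\tau^*_2}=R(\tau^*_1,\tau^*_2)$; and on $\{\tau^*_1=\tau^*_2=T\}$ both sequences are identically $T$, so $R(\tau_{2n+1},\tau_{2n})=X_T=R(\tau^*_1,\tau^*_2)$. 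There remains the ``diagonal'' set $G:=\{\tau^*_1=\tau^*_2=:\rho<T\}$, on which $R(\tau^*_1,\tau^*_2)=X_\rho$ and $R(\tau_{2n+1},\tau_{2n})=X_{\tau_{2n+1}}+(Y_{\tau_{2n}}-X_{\tau_{2n+1}})\mathds{1}_{\{\tau_{2n}<\tau_{2n+1}\}}$; since $X_{\tau_{2n+1}}\to X_\rho$ and $Y_{\tau_{2n}}\to Y_\rho$, convergence of $R(\tau_{2n+1},\tau_{2n})$ to $R(\tau^*_1,\tau^*_2)$ on $G$ reduces to showing $\mathds{1}_{\{\tau_{2n}<\tau_{2n+1}\}}\to 0$ in probability on $G\cap\{X_\rho<Y_\rho\}$, equivalently $P\big(G\cap\{X_\rho<Y_\rho\}\cap\{\tau_{2n}<\tau_{2n+1}\ \text{i.o.}\}\big)=0$.

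\emph{The diagonal set.} This is the crux, and the place where \cref{pro:ConvOfExpIMpliesConvInProb} is used. One first checks --- this being essentially the content of the proof of \cref{lem:IneqJitauAndtaun} --- that $R(\tau_{2n+1},\tau_{2n})=L^{2n+1}_{\tilde\tau_{2n+1}}$ pathwise, so that $J_B(\tau_{2n+1},\tau_{2n})=\pi^B_0(L^{2n+1}_{\tilde\tau_{2n+1}})=V^{2n+1}_0$ by \cref{thm:NonlinSnellNEW}(ii). On $\{\tau_{2n}<\tau_{2n+1}\}$, the definition \cref{eq:ApproxNEPOdd} forces $\tilde\tau_{2n+1}=\tau_{2n}<T$, hence $V^{2n+1}>X$ on $[0,\tau_{2n})$ and, as $L^{2n+1}$ is then constant equal to $Y_{\tau_{2n}}$ on $[\tau_{2n},T]$, $V^{2n+1}_{\tau_{2n}}=Y_{\tau_{2n}}$ by \cref{thm:NonlinSnellNEW}(iv). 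Assume, towards a contradiction, that the above probability is positive. Along a subsequence realising the event $G\cap\{X_\rho<Y_\rho\}\cap\{\tau_{2n}<\tau_{2n+1}\}$, one sandwiches $V^{2n+1}_0=J_B(\tau_{2n+1},\tau_{2n})$ between the lower bound $J_B(\tau^*_1,\tau_{2n})\le J_B(\tau_{2n+1},\tau_{2n})$, which tends to $J_B(\tau^*_1,\tau^*_2)$ by \cref{lem:IneqJitauAndtaun} and \cref{lem:1stEqTowardsNEP}(1), and an upper bound obtained by the Snell-envelope dynamic programming at the deterministic time $\rho$, property~(i) of \cref{thm:NonlinSnellNEW}, and the bound $L^{2n+1}\le\sup_{u\in[\rho,\tau_{2n}]}Y_u\downarrow Y_\rho$ on $[\rho,T]$ (valid there because $X\le Y$ and $V^{2n+1}_{\tau_{2n}}=Y_{\tau_{2n}}$); this upper bound exceeds the lower bound, in the limit, by a nondegenerate amount on the offending set exactly when $X_\rho<Y_\rho$. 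Writing the resulting difference of $\pi^B_0$-values in the form $\pi^B_0(H^n+\eta^n)-\pi^B_0(H^n)$ with $H^n$ bounded, $\eta^n\le 0$ bounded, and $\eta^n$ bounded away from $0$ on the offending set but tending to $0$ overall, \cref{pro:ConvOfExpIMpliesConvInProb} forces $\eta^n\to 0$ in probability, a contradiction. Hence $\mathds{1}_{\{\tau_{2n}<\tau_{2n+1}\}}\to 0$ in probability on $G\cap\{X_\rho<Y_\rho\}$, which together with the three cases above yields $R(\tau_{2n+1},\tau_{2n})\to R(\tau^*_1,\tau^*_2)$ in probability; part~7 of \cref{pro:IndiffValueProperties} then gives statement~1, and the dual argument gives statement~2.

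\emph{Main obstacle.} Off the diagonal set $G=\{\tau^*_1=\tau^*_2<T\}$ the identification of the limiting payoff is immediate from right-continuity of $X$, $Y$ and the monotonicity of the approximating times. The genuine difficulty is to exclude, on $G\cap\{X<Y\}$, that the seller keeps recalling strictly before the buyer exercises along the approximating sequence: this cannot be done by pathwise or linear-expectation arguments, and the \emph{reverse} continuity of the exponential indifference operator, \cref{pro:ConvOfExpIMpliesConvInProb}, is precisely what is needed. Setting up the auxiliary sequences $H^n,\eta^n$ correctly and verifying that the associated difference of indifference values vanishes along the subsequence --- which leans on the precise structure \cref{eq:PayoffProcessesOddEven}--\cref{eq:ApproxNEPOdd} of the best responses and on properties~(i), (ii), (iv) of \cref{thm:NonlinSnellNEW} (and on a localisation argument at time $\rho$ to pass from $\mathcal{F}_0$-measurable indifference values to statements about events in $\mathcal{F}_\rho$) --- is where most of the technical effort goes.
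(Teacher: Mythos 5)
Your reduction is sound as far as it goes: off the diagonal the limit payoff is identified by right-continuity and the monotonicity of the stopping times from \cref{lem:PropSTimes}, and the whole lemma indeed hinges on showing $P\big(\{\tau_{2n}<\tau_{2n+1}\}\cap\{\tau^*_1=\tau^*_2<T\}\cap\{X_{\tau^*_1}<Y_{\tau^*_1}\}\big)\to 0$, with \cref{pro:ConvOfExpIMpliesConvInProb} as the decisive tool. The gap is in how you propose to prove this. To invoke \cref{pro:ConvOfExpIMpliesConvInProb} you must exhibit a difference of indifference values that you can \emph{prove} tends to zero, with the nonpositive gap $\eta^n$ sitting inside it. On the buyer's side every available comparison points the wrong way: the best-response inequality of \cref{lem:IneqJitauAndtaun} and the decomposition $R(\tau_{2n+1},\tau_{2n})=\tilde H^n+\tilde\eta^n$, $\tilde\eta^n=(Y_{\tau^*_1}-X_{\tau^*_1})\mathds{1}_{\{\tau_{2n}<\tau_{2n+1},\,\tau^*_1=\tau^*_2\}}\ge 0$, both give \emph{lower} bounds for $J_B(\tau_{2n+1},\tau_{2n})$, and the matching upper bound $\limsup_n J_B(\tau_{2n+1},\tau_{2n})\le J_B(\tau^*_1,\tau^*_2)$ is precisely statement~1, so assuming it is circular. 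Your substitute upper bound (dynamic programming at $\rho$ plus $L^{2n+1}\le\sup_{u\in[\rho,\tau_{2n}]}Y_u$ on $[\rho,T]$) does not help: in the limit the obstacle still equals $Y_{\tau^*_2}$ at the diagonal, so this route bounds $V^{2n+1}_0$ by the value of the limiting stopping problem, not by $J_B(\tau^*_1,\tau^*_2)$, and the difference between these two is exactly the quantity in dispute. (Moreover $\rho=\tau^*_1$ is random, while property~(i) of \cref{thm:NonlinSnellNEW} is stated for deterministic initial times, and the phrase ``the upper bound exceeds the lower bound on the offending set'' conflates scalar $\mathcal{F}_0$-values with pathwise statements; also the ``i.o.'' event is sufficient but not equivalent to what you need.)

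The paper closes this gap by exploiting precisely the asymmetry that your ``mirror image'' reduction erases: it proves part~2 \emph{first}. For the seller the diagonal discrepancy has the favorable sign, $\eta^n=(X_{\tau^*_1}-Y_{\tau^*_1})\mathds{1}_{\{\tau_{2n}<\tau_{2n-1},\,\tau^*_1=\tau^*_2\}}\le 0$, so monotonicity plus continuity of $\pi^A_0$ give $\limsup_n J_A(\tau_{2n-1},\tau_{2n})\le\lim_n\pi^A_0(H^n)=J_A(\tau^*_1,\tau^*_2)$, while the lower bound comes from \cref{lem:IneqJitauAndtaun} applied with the auxiliary stopping time $\hat\tau=\tau^*_2\mathds{1}_{\{\tau^*_2<\tau^*_1\}}+T\mathds{1}_{\{\tau^*_2\ge\tau^*_1\}}$ together with \cref{lem:1stEqTowardsNEP}(2). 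This genuine sandwich proves part~2 without any control of the diagonal, and as a by-product yields $\pi^A_0(H^n+\eta^n)-\pi^A_0(H^n)\to 0$, so \cref{pro:ConvOfExpIMpliesConvInProb} gives $\eta^n\to 0$ in probability; the inclusion $\{\tau_{2n}<\tau_{2n+1}\}\subset\{\tau_{2n}<\tau_{2n-1}\}$ from \cref{lem:PropSTimes} then transports the vanishing of the diagonal event to the buyer, after which part~1 follows by continuity exactly as in your first step. So the missing idea is the ordering of the two statements and the transfer of the diagonal estimate from the seller to the buyer, not a symmetric treatment of the two parts.
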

We are now ready to give a proof to \cref{thm:NEPforGamePb}.
\begin{proof}[Proof of \cref{thm:NEPforGamePb}]
	By \cref{lem:IneqJitauAndtaun,lem:1stEqTowardsNEP,lem:2ndEqTowardsNEP} we have 
	$	J_B(\tau,\tau^*_2)\le J_B(\tau^*_1,\tau^*_2)$, for all $\tau\in\mathcal{T}_0$
and 
\begin{equation}\label{eq:ToObtainNEPfinal}
		J_A(\tau^*_1,\tau)\le J_A(\tau^*_1,\tau^*_2),\ \text{ for all } \tau\in\mathcal{T}_0\text{ satisfying } P[\tau=\tau^*_1<T]=0.
	\end{equation}
To obtain that $(\tau^*_1,\tau^*_2)$ indeed is a NEP, it remains to show that \cref{eq:ToObtainNEPfinal} holds for arbitrary $\tau\in\mathcal{T}_0$. To this end, let $\tau\in\mathcal{T}_0$
and define the sequence $(\hat{\tau}^n)_{n\in\N}\subset \mathcal{T}_0$ by 
$$
\hat{\tau}^n:= \left((\tau+n^{-1})\wedge T\right)\mathds{1}_{\{\tau=\tau^*_1<T\}}+ \tau\mathds{1}_{\Omega\setminus \{\tau=\tau^*_1<T\}},\quad n\in\N.
$$
Then $\{\hat{\tau}^n=\tau^*_1<T\}=\emptyset$ for any $n\in\N$ so that \cref{eq:ToObtainNEPfinal} gives
\[
	J_A(\tau^*_1,\hat{\tau}^n)\le J_A(\tau^*_1,\tau^*_2)\quad \text{for all } n\in\N.
\]
Furthermore $\hat{\tau}^n\downarrow \tau$ almost surely as $n\uparrow \infty$. With the right-continuity of the bounded process $t\mapsto R(\tau^*_1,t)$
and the continuity of $\pi^A_0(\cdot)$, this implies 
$J_A(\tau^*_1,\tau)\le J_A(\tau^*_1,\tau^*_2)$. Overall, we have shown that $(\tau^*_1,\tau^*_2)\in \mathcal{T}_0^2$ is a NEP and the proof is completed.
\end{proof}

\section{Appendix}\label{app:AppendixB} 
This section contains the proofs of \cref{pro:ConvOfExpIMpliesConvInProb,thm:NonlinSnellNEW,lem:IneqJitauAndtaun,lem:1stEqTowardsNEP,lem:2ndEqTowardsNEP}.

\begin{proof}[Proof of \cref{pro:ConvOfExpIMpliesConvInProb}]
Without loss of generality let $C=0$. By Theorem~13 in Mania and Schweizer~\cite{ManiaSchweizer05} and following the proof of Proposition~14 
therein, we know that for any $n\in\N$
\begin{equation}\label{20.12.2016.2}
	\pi^{\alpha,0}_0(H^n+\eta^n)- \pi^{\alpha,0}_0(H^n) = E^{Q^n}[\eta^n],
\end{equation}
for $Q^n\sim P$ given by $dQ^n = \mathcal{E}\left(\frac{\alpha}{2}(L^n(\alpha) + \wt{L}^n(\alpha)\right)_TdQ^E =: Z^n_TdQ^E$, with $L^n(\alpha)$ and $\wt{L}^n(\alpha)$ being $BMO\left(Q^E\right)$-martingales satisfying 
\begin{equation}\label{eq:BMOboundLalpha}
\sup_{n\in\N}\left\lVert \frac{\alpha}2\left(L^n(\alpha)+\wt{L}^n(\alpha)\right)\right\rVert_{BMO(Q^E)}\le \frac{\alpha}2 \sup_{n\in\N}\left(e^{\lVert H_n+\eta_n\rVert_\infty}+e^{\lVert H_n\rVert_\infty}\right)^2<\infty,
\end{equation}
where $Q^E$ denotes the EMMM, and we refer to Kazamaki~\cite{Kazamaki94} for some essentials on BMO theory. In the proof of Theorem~2.4 in \cite{Kazamaki94}, 
implication~$(a)\implies(b)$, the parameter~$p>1$ only has to satisfy $||M||_{BMO}<\sqrt{2}(\sqrt{p}-1)$. Thus, applied to 
$M:=\frac{\alpha}2\left(L^n(\alpha)+\wt{L}^n(\alpha)\right)$, and using
\cref{eq:BMOboundLalpha}, $p$ can be chosen uniformly in $n$ and one gets
\begin{equation*}
\sup_{n\in\N}E^{Q^E}\left[\left(Z^n_T\right)^{-\frac{1}{p-1}}\right]\le c_p,
\end{equation*}
where $c_p>0$ is a universal constant.
Then, H\"older's inequality gives for all $n\in\N$,
\begin{equation}\label{eq:reverseHolderClassical}
\left(E^{Q^E}\left[|\eta_n|^{\frac{1}{p}}\right]\right)^p
\le E^{Q^E}\left[|\eta_n|Z^n_T\right]
\left(E^{Q^E}\left[\left(Z^n_T\right)^{-\frac{1}{p-1}}\right]\right)^{p-1}
\le E^{Q^n}[-\eta_n]c_p^{p-1}.
\end{equation}
Since the LHS of \cref{20.12.2016.2,} tends to $0$, \cref{eq:reverseHolderClassical} implies 
that $\eta_n$ converges to $0$ in $Q^E$-probability.
Because $Q^E\sim P$ holds, the assertion follows.
\end{proof}

\begin{proof}[Proof of \cref{thm:NonlinSnellNEW}]
For notational simplicity, we denote $\pi:=\pi^{\alpha,C}$. For all $s\in\Q\cap [0,T]$, fix throughout the proof a version $V_s:=\esssup_{\tau\in\mathcal{T}_s}\, \pi_s(L_\tau)$ 
satisfying $V_s\ge L_s$ and for technical convenience set $V_t:=L_T$ for $t>T$. Define 
\begin{equation}\label{12.1.2017.1}
\wt{V}_t:=\liminf_{s\ge t, s\in\Q, s\to t}V_s:=\sup_{m\in\N}\inf_{s\in[t,t+1/m]\cap\Q}V_s,\quad t\in[0,T]. 
\end{equation}
By right-continuity of $L$, we have $\wt{V}\ge L$. Furthermore let $t_0\in[0,T]$. The real-valued mapping $\wt{V}|_{\Omega\times[0,t_0]}$ can
be written as
\begin{equation*}
\begin{split}\wt{V}_t(\omega)
& = \mathds{1}_{\{t_0\}}(t) \wt{V}_{t_0}(\omega) + \mathds{1}_{(t<t_0)}\sup_{m\in\N}\inf_{s\in[t,(t+\frac{1}{m})\wedge t_0]\cap\Q}V_s(\omega)\\
& = \mathds{1}_{\{t_0\}}(t) \wt{V}_{t_0}(\omega) + \mathds{1}_{(t<t_0)}\sup_{m\in\N}\inf_{\substack{s\in\Q,\\ s\le t_0}}\left( \mathds{1}_{[s-\frac{1}{m},s]}(t)V_s(\omega) + \infty \mathds{1}_{[s-\frac{1}{m},s]^{c}}(t)\right).
\end{split}
\end{equation*}
By the usual conditions, $\wt{V}_{t_0}$ is $\mathcal{F}_{t_0}$-measurable.
Thus, $\wt{V}|_{\Omega\times[0,t_0]}$ is $\mathcal{F}_{t_0}\otimes\mathcal{B}([0,t_0])$-measurable, i.e., $\wt{V}$ is obviously progressively measurable.

{\em Step 1:} One has
\begin{equation}\label{7.1.2017.1}
\pi_0(V_s) = \sup_{\tau\in\mathcal{T}_s}\, \pi_0(L_\tau)\quad\mbox{for all\ }s\in\Q\cap[0,T] 
\end{equation}
and
\begin{equation}\label{29.12.2016.2}
\pi_t(V_s)\le V_t,\quad P\mbox{-a.s.\ for all\ }t,s\in\Q,\ 0\le t\le s\le T. 
\end{equation}
Indeed, $\pi$ is time consistent, strictly monotone, continuous and by the 
local property of $\pi_s(\cdot)$, the set $\{\pi_s(L_{\tau})\  |\ \tau\in\mathcal{T}_s\}$ is maximum-stable. Consequently, the assertions follow one-to-one from the standard arguments for the linear expectation, see, e.g., Lemma~D.1 and Proposition~D.2 in
\cite{KaratzasShreveMathFi}, where (D.3) is only
evaluated at deterministic and rational-valued stopping times. 

{\em Step 2:} Let us show that there exists a set $\Omega_1\in\mathcal{F}$ with $P[\Omega_1]=1$ such that
\begin{equation}\label{23.12.2016.1}
\wt{V}_t(\omega) = \liminf_{s>t, s\to t}\wt{V}_s(\omega),\ \forall t\in [0,T],\ \omega\in \Omega_1.
\end{equation}
For $t\in \R\setminus \Q$, 
\cref{23.12.2016.1} is satisfied for all $\omega\in\Omega$. Indeed, for all $\omega\in\Omega$ and $q\in\Q$,
one has by definition that $\wt{V}_q(\omega)\le V_q(\omega)$ and thus 
\beao
\liminf_{s>t, s\to t}\wt{V}_s(\omega)
\le \liminf_{s>t, s\in\Q, s\to t}V_s(\omega)=\wt{V}_t(\omega).
\eeao 
On the other hand, for every $\eps>0$ and $m\in\N$, there exists $s_m\in\R$ with $s_m\in(t,t+1/m)$ and $\wt{V}_{s_m}(\omega)\le \liminf_{s>t, s\to t}\wt{V}_s(\omega) + \eps$. To every $s_m$, there belongs a $q_m\in\Q$ with $q_m\in(t,t+2/m)$ and $V_{q_m}(\omega)\le \wt{V}_{s_m}(\omega) + \eps$. This yields $\wt{V}_t(\omega) \le \liminf_{s>t, s\to t}\wt{V}_s(\omega) + 2\eps$. Putting together, we arrive at \cref{23.12.2016.1} for all $\omega\in\Omega$ and $t\in \R\setminus \Q$.
 
Now let $t\in\Q$. One has 
$P$-almost surely,
\begin{equation}\label{22.12.2016.1} 
\sup_{m\in\N}\inf_{s\in(t,t+\frac{1}{m}]\cap\Q}V_s =
\pi_t\Big(\sup_{m\in\N}\inf_{s\in(t,t+\frac{1}{m}]\cap\Q}V_s\Big)\le \sup_{m\in\N}\inf_{s\in(t,t+\frac{1}{m}]\cap\Q}\pi_t(V_s)\le V_t,
\end{equation}
where for the first inequality, monotone convergence is used, and the second inequality holds by \cref{29.12.2016.2}.
On the paths where \cref{22.12.2016.1} holds, it follows that $\wt{V}_t = \liminf_{s>t, s\to t}\wt{V}_s$ by the same reasons as for irrational points. This implies \cref{23.12.2016.1}.

{\em Step 3:}  Let us show that $\wt{V}$ satisfies (i). We start with   deterministic stopping times, i.e., $t,s\in\R$, $0\le t\le s\le T$ and $\tau\equiv s$. Since the case $t=s$ is trivial, assume that $t<s$. 
Using again monotone convergence and \cref{29.12.2016.2}, yields for any $u\in[t,s]\cap\mathbb{Q}$ that 
\[
\pi_u(\wt{V}_s) =  \pi_u\Big(\sup_{m\in\N}\inf_{v\in[s,s+1/m]\cap\mathbb{Q}} V_v\Big)\le \sup_{m\in\N}\inf_{v\in[s,s+1/m]\cap\mathbb{Q}}\pi_u(V_v)\le V_u,\quad P\mbox{-a.s.}.
\]
By the right-continuity of $\pi_\cdot(\wt{V}_t)$, this leads to
\begin{equation}\label{eq:SupMartPropTildeV}
    \pi_t(\wt{V}_s)\le \wt{V}_t,\quad P\mbox{-a.s.}.
\end{equation}
We now show that \cref{eq:SupMartPropTildeV} extends to stopping times, i.e., $\pi_t(\wt{V}_\tau)\le \wt{V}_t$ also holds for any stopping time $\tau\in\mathcal{T}_t$.
We first show that this holds for stopping times with finitely many values in $[t,T]$. Let $\tau$ be such a stopping time, valued in $\{t_0,\ldots,t_N\}$, with 
$t=t_0<t_1<\ldots<t_N.$
Then by translation invariance, that is a special case of Property~3 in 
Proposition~\ref{pro:IndiffValueProperties}, one has
\[
    \pi_{t_{N-1}}(\wt{V}_{\tau}) = \pi_{t_{N-1}}\Big(\sum_{k=1}^N \mathds{1}_{\left\lbrace \tau=t_k\right\rbrace}\wt{V}_{t_k}\Big) =\pi_{t_{N-1}}\big(\mathds{1}_{\left\lbrace \tau=t_N\right\rbrace}\wt{V}_{t_N}\big)+\sum_{k=1}^{N-1}\mathds{1}_{\left\lbrace \tau=t_k\right\rbrace}\wt{V}_{t_k}.
\]
Since $\{\tau=t_N\}=\{\tau>t_{N-1}\}\in \F_{t_{N-1}}$, the local property  and \cref{eq:SupMartPropTildeV} give
\[
    \pi_{t_{N-1}}(\wt{V}_{\tau}) \le \mathds{1}_{\left\lbrace \tau=t_N\right\rbrace} \wt{V}_{t_{N-1}}+\sum_{k=1}^{N-1}\mathds{1}_{\left\lbrace \tau=t_k\right\rbrace}\wt{V}_{t_k} =\mathds{1}_{\left\lbrace \tau\ge t_{N-1}\right\rbrace} \wt{V}_{t_{N-1}}+\sum_{k=1}^{N-2}\mathds{1}_{\left\lbrace \tau=t_k\right\rbrace}\wt{V}_{t_k}.
\]
Using a backward induction in $k=N-1,N-2,\ldots$, we obtain by time-consistency that $\pi_{t_0}(\widetilde{V}_{\tau})\le 
\mathds{1}_{\{\tau\ge t_0\}}\wt{V}_{t_0}$, i.e., $\pi_t(\wt{V}_{\tau})\le\wt{V}_t$. Now for an arbitrary $\tau$ in $\mathcal{T}_t$, there exists a sequence $(\tau_n)_n$ of finitely-valued stopping times $\tau_n$ decreasing to $\tau$.
By \cref{23.12.2016.1}, this implies $\wt{V}_{\tau} \le \liminf_{n\to \infty}\wt{V}_{\tau_n}$.
Then, from monotonicity and continuity of $\pi_t(\cdot)$ (note that this holds under the continuity assumption on the filtration, see, \cref{pro:IndiffValueProperties}), we have
\begin{equation}\label{9.1.2017.1}
    \pi_t(\wt{V}_{\tau}) 
\le \pi_t\Big(\liminf_{n\to \infty}\wt{V}_{\tau_n}\Big)
\le   \liminf_{n\to \infty} \pi_t(\wt{V}_{\tau_n})
    \le \wt{V}_t,\quad P\mbox{-a.s.}.
\end{equation}
This means that $\wt{V}$ satisfies (i). 

{\em Step 4:} Let us show that
\begin{equation}\label{7.1.2017.2}
\pi_0(\wt{V}_t) = \sup_{\tau\in\mathcal{T}_t}\, \pi_0(L_\tau)\quad\mbox{for all\ }t\in [0,T] .
\end{equation}
Indeed, by \cref{9.1.2017.1}, it holds that $\wt{V}_t\ge \pi_t(\wt{V}_\tau)\ge \pi_t(L_\tau)$ $P$-a.s.\ for all 
$\tau\in\mathcal{T}_t$ which implies ``$\ge$'' by time-consistency and monotonicity of $\pi$. On the other hand: from \cref{7.1.2017.1} and $\mathcal{T}_u\subset \mathcal{T}_t$ for all $u\ge t$,
it follows that $\displaystyle\pi_0(\wt{V}_t)\le \liminf_{u\in\Q, u\ge t,\ u\to t}\pi_0(V_u)\le \sup_{\tau\in\mathcal{T}_t}\, \pi_0(L_\tau)$.

{\em Step 5:} Define $\tau^\eps_t:=\inf\{u\in[t,T]\cap\Q\ |\ L_u\ge \wt{V}_u - \eps\}$. Then $\tau^\eps_t\in\mathcal{T}_t$. By \cref{7.1.2017.2} and the right-continuity of $L$, there exist $[t,T]\cap \Q$-valued stopping times~$\sigma_n$ with $\pi_0(L_{\sigma_n})\ge \pi_0(\wt{V}_t) - 1/n$.
We have $L\le \wt{V}$ and $L\le \wt{V} - \eps$ on $[t,\tau^\eps)\cap\Q$. Thus,
\begin{equation*}
\pi_0(L_{\sigma_n}) \le \pi_0\left(\wt{V}_{\sigma_n} - \eps \mathds{1}_{\{\sigma_n<\tau^\eps_t\}}\right).
\end{equation*}
By Step~3, holds $\pi_0(\wt{V}_{\sigma_n})\le \pi_0(\wt{V}_t)$. Putting together: 
$\pi_0(\wt{V}_{\sigma_n})-\pi_0\big(\wt{V}_{\sigma_n} - \eps \mathds{1}_{\{\sigma_n<\tau^\eps_t\}}\big)\rightarrow 0$, as $n\to\infty$. From \cref{pro:ConvOfExpIMpliesConvInProb}, follows 
\begin{equation}\label{23.12.2016.2}
P[\sigma^n<\tau^\eps_t]\longrightarrow 0,\quad n\to\infty.
\end{equation}
In addition one has 
\begin{equation}\label{23.12.2016.3}
\begin{split}\pi_0(\wt{V}_{\sigma_n\wedge\tau^\eps_t}) &\ge \pi_0(L_{\sigma_n}\mathds{1}_{\{\sigma_n\le\tau^\eps_t\}} + \wt{V}_{\tau^\eps_t}\mathds{1}_{\{\tau^\eps_t<\sigma_n\}})\\
&\ge \pi_0(\pi_{\sigma_n\wedge\tau^\eps_t}(L_{\sigma_n}))\\
&=\pi_0(L_{\sigma_n})\ge \pi_0(\wt{V}_t) - \frac1{n},
\end{split}
\end{equation}
where the second inequality holds because $\wt{V}_{\tau^\eps_t}=\sup_{m\in\N}\inf_{s\in[\tau^\eps_t,(\tau^\eps_t+\frac{1}{m})\wedge\sigma_n]\cap\Q}V_s$
on $\{\tau^\eps_t<\sigma_n\}$, $P\left[V_s\ge \pi_s(L_{\sigma_n})\text{ on }\{\sigma_n\ge s\},\ \forall s\in[0,T]\cap\Q\right]=1$ and $\pi_\cdot(L_{\sigma_n})$ is right-continuous.
By \cref{23.12.2016.2}, we have $\wt{V}_{\sigma_n\wedge\tau^\eps_t}\longrightarrow 
\wt{V}_{\tau^\eps_t}$ in probability as $n\to\infty$. Together with 
\cref{23.12.2016.3} and continuity of $\pi_0(\cdot)$, this yields 
$\pi_0(\wt{V}_{\tau^\eps_t})\ge \pi_0(\wt{V}_t)$. On the other hand 
$L_{\tau^\eps_t}\ge \wt{V}_{\tau^\eps_t} -\eps$ and we arrive at
\begin{equation*}
\pi_0(L_{\tau^\eps_t}) \ge \pi_0(\wt{V}_t) - \eps.
\end{equation*}
Let $\tau^\star_t:=\sup_{\eps>0}\tau^\eps_t$.
Since $L$ has no negative jumps, $L_{\tau^\star_t} \ge \lim_{\eps\to 0} L_{\tau^\eps_t}$ and $\tau^\star_t$ is an optimal stopping strategy for $\mathcal{T}_t$, i.e.\ $\pi_0(L_{\tau^\star_t})= \pi_0(\wt{V}_t)$.

One has $\wt{V}_{\tau^\star_t}\ge L_{\tau^\star_t}$ and by Step~3, 
$\pi_0(\wt{V}_{\tau^\star_t})\le \pi_0(\wt{V}_t)$. Thus, $P\big[\wt{V}_{\tau^\star_t}=L_{\tau^\star_t}\big]=1$ by the strict monotonicity of $\pi_0(\cdot).$
Since $L_u=\wt{V}_u$ for some $u\in[t,\tau^\star)$ would lead to a contradiction, we conclude
\begin{equation*}
P\big[\tau^\star_t=\inf\{u\in\R\ |\ u\ge t,\ L_u=\wt{V}_u\}\big]=1
\end{equation*}
and by the usual conditions $\inf\{u\in\R\ |\ u\ge t,\ L_u=\wt{V}_u\}$ is a stopping time. Property (i) and the optimality of $\tau^\star_t$ yield that 
$\wt{V}$ coincides with the RHS of \cref{7.1.2017.3} $P$-a.s..

{\em Step 6:} Putting together, we have shown that $\wt{V}$ is progressively measurable and satisfies both (\ref{7.1.2017.3}) and the properties (i), (ii). Now, we  proceed to right-continuity. First, we show that $\wt{V}$ (itself) is right-continuous $P$-a.s.\ along stopping times.
Let $\tau\in\mathcal{T}_0$ with $\tau<T$ and  $(\tau_n)_{n\in\N}\subset\mathcal{T}_0$ with $\tau_n\downarrow \tau$ for $n\uparrow \infty$. Let us show that  $P[\wt{V}_{\tau_n}\longrightarrow\wt{V}_{\tau}]=1$. By (\ref{23.12.2016.1}), it remains to show that  
\beam\label{26.5.2018.2}
P\left(\limsup_{n\to\infty} \wt{V}_{\tau_n}\le \wt{V}_{\tau}\right)=1.
\eeam
For every $m\in\N$, we consider the debut~$D_m:=\inf\{u>\tau\ |\ |L_u - L_\tau|>1/m\}\wedge T$. By the right-continuity of $L$, one has $D_m>\tau$.  
By the continuity of the filtration, there exists a continuous version of the martingale $t\mapsto E(D_m\ |\ \mathcal{F}_t)$, and thus $D_m$ possesses the announcing sequence~$(T_{m,k})_{k\in\N}\subset \mathcal{T}_0$ given by 
$T_{m,k}:=\inf\{t>\tau\ |\ E(D_m\ |\ \mathcal{F}_t) \le t+1/k\}$. We want to use this to show that
\beam\label{26.5.2018.1}
P(\tau<\sigma_m<D_m)=1\quad\mbox{for some\ }\sigma_m\in\mathcal{T}^{\Q}_0.
\eeam
By a standard exhausting argument, it is sufficient to construct such a $\sigma_m$ on the set~$B:=\{E(D_m\ |\ \mathcal{F}_\tau) > \tau + 1/k_0\}\in\mathcal{F}_\tau$, $k_0\in\N$. 
On $B$, one has $\tau < T_{m,k_0} < T_{m,k_0+1} < T_{m,k_0+2} < \ldots < D_m$. 
We choose $l$ as the minimal integer s.t. $P(B\cap \{\lfloor T_{m,k_0} l+1\rfloor/l> T_{m,k_0+1}\})\le 2^{-k_0}$ and put $\sigma_m=\lfloor T_{m,k_0}l+1\rfloor/l$ on $B\cap\{\lfloor T_{m,k_0} l+1\rfloor/l\le T_{m,k_0+1}\}$. On $B\cap\{\lfloor T_{m,k_0} l+1\rfloor/l > T_{m,k_0+1}\}$, we proceed analogously and determine some rational between $T_{m,k_0+1}$ and $T_{m,k_0+2}$ which is only missed on $B$ with unconditional probability smaller than $2^{-(k_0+1)}$. By the lemma of Borel-Cantelli, this construction leads to a stopping time~$\sigma_m$ satisfying
(\ref{26.5.2018.1}). By (\ref{26.5.2018.1}), there also exists a $\delta_m\in\R_+\setminus\{0\}$ small enough s.t. $P(A_m)\ge 1 -1/m$ where $A_m:=\{\sigma_m\ge \tau + \delta_m\}$.

For $m$ large, by the construction of $\sigma_m$,  the European indifference value of the claim~$V_{\sigma_m}$ shortly after time~$\tau$ is similar to the American one. Thus, we can show (\ref{26.5.2018.2}) by using the right-continuity of the dynamic European indifference price derived in Mania and Schweizer~\cite{ManiaSchweizer05}. In the following, we work out this idea in detail. Let $u\in\Q$. 
On $\{\tau\le u\le\sigma_m\}$, one has 
\beao
V_u \le \pi_u\left(\sup_{\tau\le v\le \sigma_m}L_v\vee V_{\sigma_m}\right)
\le \pi_u(V_{\sigma_m}+2/m) = \pi_u(V_{\sigma_m})+2/m,\quad\mbox{$P$-a.s.,}
\eeao
where the first inequality uses the fact that for all $\sigma\in\mathcal{T}_u$
\beao
\pi_u(L_\sigma) 
= \pi_u(\mathds{1}_{\{\sigma\le\sigma_m\}}L_\sigma + \mathds{1}_{\{\sigma>\sigma_m\}}
\pi_{\sigma_m}(L_\sigma))
& \le & \pi_u\left(\sup_{u\le v\le \sigma_m}L_v\vee V_{\sigma_m}\right)\\
& & \quad\mbox{$P$-a.s.\ on}\ \{\sigma_m\ge u\},
\eeao
and the second inequality holds by (\ref{26.5.2018.1}).
It follows that for all $\delta\in (0,\delta_m]$
\beam\label{25.5.2018.1}
\wt{V}_{\tau_n} 
\le \sup_{u\in\Q, \tau\le u\le \tau + \delta} V_u
& \le & \sup_{u\in\Q, \tau\le u\le \tau + \delta}\pi_u(V_{\sigma_m})+2/m\\
& & \qquad\qquad\quad\mbox{$P$-a.s.\ on}\ A_m\cap \{\tau_n< \tau+\delta\}\nonumber.
\eeam
By the right-continuity of the European indifference price, i.e.,
\beam\label{25.5.2018.3}
\pi_t(V_{\sigma_m})\longrightarrow \pi_{\tau}(V_{\sigma_m})\quad \mbox{$P$-a.s.\ for}\ t\downarrow \tau,
\eeam
the RHS of (\ref{25.5.2018.1}) converges $P$-a.s. to 
$\pi_\tau(V_{\sigma_m})+2/m$ for $\delta\downarrow 0$. This yields
\beam\label{27.5.2018.1}
\limsup_{n\to\infty} \wt{V}_{\tau_n}\le \pi_\tau(V_{\sigma_m}) + 2/m
\quad \mbox{$P$-a.s.\ on\ }A_m.
\eeam

On the other hand, by $\sigma_m>\tau$, one has
\beam\label{25.5.2018.2}
\wt{V}_\tau = \sup_{k\in\N}\inf_{s\in[\tau,\tau+1/k]\cap\Q}V_s
\ge \sup_{k\in\N}\inf_{s\in[\tau,\tau+1/k]\cap\Q}\pi_s(V_{\sigma_m})
= \pi_\tau(V_{\sigma_m}),\ \mbox{$P$-a.s.},
\eeam
where the last equality follows again from (\ref{25.5.2018.3}).
Putting (\ref{27.5.2018.1}) and (\ref{25.5.2018.2}) together, we conclude
\beao
P\left(\limsup_{n\to\infty} \wt{V}_{\tau_n}\le \wt{V}_{\tau} + 2/m\right)\ge P(A_m)\ge 1- 1/m,\quad\forall m\in\N,
\eeao
which implies (\ref{26.5.2018.2}).

{\em Step 7:} Let $\widehat{V}$ be the $P$-optional projection of the bounded process~$\wt{V}$, i.e., $\widehat{V}$ is optional
and $\widehat{V}_\tau = E_\tau[\wt{V}_\tau]$ $P$-a.s.\ for all $\tau\in\mathcal{T}_0$ (see, e.g., Theorem~5.1 of He et al.~\cite{HeWangYan92}). Since $\wt{V}$ is progressively measurable, $\wt{V}_\tau$ is
$\mathcal{F}_\tau$-measurable, and we arrive at  $\widehat{V}_\tau = \wt{V}_\tau$ $P$-a.s.\ for all $\tau\in\mathcal{T}_0$. 
It follows from a section theorem (see, e.g., Theorem~4.7 in \cite{HeWangYan92}) that for every $t\in[0,T]$, the first time $\widehat{V}$ hits $L$ after time~$t$ is a stopping time. 
Then clearly, $\widehat{V}$, $\wt{V}$, and $L$ coincide $P$-a.s.\ at the minimum of this stopping time and $\tau^\star_t$ from Step 5. 
This implies the equality of the two stopping times. 
Since the optional process $\widehat{V}$ is $P$-a.s.\ right-continuous along stopping times, it follows again by a section theorem that it is right-continuous up to evanescence. 
With the usual conditions, one can choose $\widehat{V}$ to be right-continuous everywhere. Uniqueness is obvious.

{\em Step 8:} Let us show that the optional projections of \cref{12.1.2017.1} satisfy (iii).
Let $\sigma\in\mathcal{T}_0$ and $L^1,L^2$ be two payoff processes satisfying $L^1=L^2$ on $[\sigma,T]$, to which we associate $V^1,V^2$ and $\wt{V}^1,\wt{V}^2$ as above. Let $\tau\in\mathcal{T}_0,\ s\in\Q,\ m\in\N$. 
By $\{\tau\ge \sigma,\, \tau\in[s-1/m,s]\}\in\mathcal{F}_s$, the local property of $\pi_s(\cdot)$ implies for $i\in\{1,2\}$,
\begin{equation*}
\mathds{1}_{\left\lbrace\tau\ge \sigma,\, \tau\in\left[s-\frac{1}{m},s\right]\right\rbrace}
\esssup_{\nu\in\mathcal{T}_s}\pi_s( L^i_\nu)
=\esssup_{\nu\in\mathcal{T}_s}\pi_s(\mathds{1}_{\left\{\tau\ge \sigma,\, \tau\in\left[s-\frac{1}{m},s\right]\right\}} L^i_\nu)\quad P\mbox{-a.s},
\end{equation*}
where, by assumption, the RHS does not depend on $i$. Hence we obtain 
\begin{equation}\label{11.1.2017.1}
\mathds{1}_{\left\{\tau\ge \sigma,\, \tau\in\left[s-\frac{1}{m},s\right]\right\}}V^1_s = \mathds{1}_{\left\{\tau\ge \sigma,\, \tau\in\left[s-\frac{1}{m},s\right]\right\}}V^2_s\quad P\mbox{-a.s.}.
\end{equation}
On the other hand, the definition of $\wt{V}$ yields for $i\in\{1,2\}$,
\begin{equation}\label{11.1.2017.2}
\mathds{1}_{\{\tau\ge \sigma\}} \wt{V}^i_\tau = \sup_{m\in\N}\inf_{s\in\Q}\left( \mathds{1}_{\{\tau\ge \sigma\}}\mathds{1}_{\left\{\tau\in\left[s-\frac{1}{m},s\right]\right\}}V^i_s + \infty \mathds{1}_{\{\tau\ge \sigma\}}\mathds{1}_{\left\{\tau\not\in\left[s-\frac{1}{m},s\right]\right\}}\right).
\end{equation}
Putting \cref{11.1.2017.1,11.1.2017.2} together, one obtains  
$\mathds{1}_{\{\tau\ge \sigma\}} \wt{V}^1_\tau = \mathds{1}_{\{\tau\ge \sigma\}} \wt{V}^2_\tau$ $P$-a.s.. By Step~7, one can replace $\wt{V}^1$ and $\wt{V}^2$ by
their optional projections. Then, assertion~(iii) follows again by applying a section theorem.

{\em Step 9:} Assertion~(iv) follows with the same arguments as in Step~8 using the fact that $L=L_\sigma$ on $[\sigma,T]$ implies 
\begin{equation*}
\mathds{1}_{\left\{\tau\ge \sigma,\, \tau\in\left[s-\frac{1}{m},s\right]\right\}} L_\nu
= \mathds{1}_{\left\{\tau\ge \sigma,\, \tau\in\left[s-\frac{1}{m},s\right]\right\}} L_\sigma,\quad \tau\in\mathcal{T}_0,\ s\in\Q,\ m\in\N,\ \nu\in\mathcal{T}_s
\end{equation*}
and, by $\mathcal{F}_s$-measurability, the RHS coincides with its 
$\pi_s$-indifference value. 
\end{proof}
\begin{proof}[Proof of \cref{lem:IneqJitauAndtaun}]
	We show this using the nonlinear Snell envelope of \cref{thm:NonlinSnellNEW}. Consider the 
construction \cref{eq:PayoffProcessesOddEven,eq:OptStoppingTimesOdd,eq:SnellEnvOdd,eq:ApproxNEPOdd}.
By \cref{thm:NonlinSnellNEW}(iv), one has
	$V^{2n+1}_{\tau_{2n}} = L^{2n+1}_{\tau_{2n}}=X_T\mathds{1}_{\{\tau_{2n}=T\}}+Y_{\tau_{2n}}\mathds{1}_{\{\tau_{2n}<T\}}$. Since $X\le Y$ and $V^{2n+1}$ dominates $L^{2n+1}$, we conclude that $X_\tau\mathds{1}_{\{\tau\le\tau_{2n}\}}+Y_{\tau_{2n}}\mathds{1}_{\{\tau>\tau_{2n}\}}\le V^{2n+1}_{\tau\wedge \tau_{2n}}$
for any $\tau\in\mathcal{T}_0$.
	Monotonicity of $\pi^B_0(\cdot)$  and the $\pi^B$-supermartingale
	property of $V^{2n+1}$ (see  \cref{thm:NonlinSnellNEW}(i)) imply
    \begin{equation}\label{eq:FirstJ1Exp}
	\begin{split}
		J_B(\tau,\tau_{2n}) = \pi^B_0\big( X_\tau\mathds{1}_{\{\tau\le\tau_{2n}\}}+Y_{\tau_{2n}}\mathds{1}_{\{\tau>\tau_{2n}\}}\big)
				    \le \pi^B_0\big(V^{2n+1}_{\tau\wedge \tau_{2n}}\big)\le V^{2n+1}_0.
	\end{split}
	\end{equation}
On the other hand, as already observed in \cref{eq:OptStoppingTimesOdd}, 
\begin{equation*}
\pi^B_0(L^{2n+1}_{\tilde{\tau}_{2n+1}}) = \sup_{\tau\in\mathcal{T}_0}\pi^B_0(L^{2n+1}_\tau) = V^{2n+1}_0. 
\end{equation*}
Using the definition of $\tilde{\tau}_{2n+1}$, part 3.\ of \cref{lem:PropSTimes} implies that 
$\{\tilde{\tau}_{2n+1}=\tau_{2n}\} \subset \{\tau_{2n-1} \ge \tau_{2n}\}$. This gives
that $L^{2n+1}_{\tau_{2n+1}}=L^{2n+1}_{\tilde{\tau}_{2n+1}}$, i.e., $\tau_{2n+1}$ is also the maximizer of $\sup_{\tau\in\mathcal{T}_0}\pi^B_0(L^{2n+1}_\tau)$. In addition, by part 2.\ of \cref{lem:PropSTimes},
$\{\tau_{2n+1}=\tau_{2n}\} =  \{\tau_{2n+1}=\tau_{2n}=T\}$, which implies that
\begin{align*}
L^{2n+1}_{\tau_{2n+1}}
& =  X_{\tau_{2n+1}}\mathds{1}_{\{\tau_{2n+1}<\tau_{2n}\}}
 + X_T\mathds{1}_{\{\tau_{2n+1}=\tau_{2n}=T\}}
 + Y_{\tau_{2n}}\mathds{1}_{\{\tau_{2n+1}>\tau_{2n}\}}\\
& =  X_{\tau_{2n+1}}\mathds{1}_{\{\tau_{2n+1}\le\tau_{2n}\}}+Y_{\tau_{2n}}\mathds{1}_{\{\tau_{2n+1}>\tau_{2n}\}}.
\end{align*} 
Putting together, one obtains
\begin{equation}\label{eq:SecondJ1Exp}
\begin{split}
	J_B(\tau_{2n+1},\tau_{2n}) &= \pi^B_0\big( X_{\tau_{2n+1}}\mathds{1}_{\{\tau_{2n+1}\le\tau_{2n}\}}+Y_{\tau_{2n}}\mathds{1}_{\{\tau_{2n+1}>\tau_{2n}\}}\big)\\
				   &=\pi^B_0\big( L^{2n+1}_{\tau_{2n+1}}\big)=V^{2n+1}_0. 
\end{split}
\end{equation} 
Combining \cref{eq:FirstJ1Exp,eq:SecondJ1Exp} yields the first inequality in \cref{eq:ApproximateNEP}. The second inequality of \cref{eq:ApproximateNEP} is obtained similarly but simpler, since $t\mapsto R(\tau_{2n+1},t)$ is already 
c\`adl\`ag and $L^{2n+2}_t = -R(\tau_{2n+1},t)$ for all $t\in[0,T]$.  
\end{proof}
\begin{proof}[Proof of \cref{lem:1stEqTowardsNEP}]
Let $\tau\in\mathcal{T}_0$.\\
	Part 1: By $\tau_{2n}\ge \tau^*_2$ for all $n\in\N$, the right-continuity
	of $t\mapsto R(\tau,t)$, and the continuity of $\pi^B_0(\cdot)$, it follows for $J_B(\tau,\tau_{2n}):= \pi^B_0\big(R(\tau,\tau_{2n})\big)$ that
	\[
	\lim_{n\to \infty}J_B(\tau,\tau_{2n}) = J_B(\tau,\tau^*_2). 
	\]
	Part 2:  Since  $t\mapsto R(t,\tau)$ may not be right-continuous, we cannot argue as in Part~1. Instead, we apply the arguments of Part~1 to the right limit $t\mapsto -R(t+,\tau)$ and obtain
\[
	\lim_{n\to \infty}J_A(\tau_{2n+1},\tau) 
	= \pi^A_0\big(-X_{\tau^*_1}\mathds{1}_{\{\tau>\tau^*_1\}}-Y_{\tau}\mathds{1}_{\{\tau\le\tau^*_1\}}\big). 
	\]	
But, under the assumption that $P[\tau=\tau^*_1<T]=0$, the RHS coincides with 	$J_A(\tau^*_1,\tau)$, and we are done.
\end{proof}

\begin{proof}[Proof of \cref{lem:2ndEqTowardsNEP}]
	We first show part 2. By definition, one has 
	$$
	J_A(\tau_{2n-1},\tau_{2n})=\pi^A_0(G^n),\quad\mbox{where\ }G^n:= -X_{\tau_{2n-1}}\mathds{1}_{\{\tau_{2n}\ge\tau_{2n-1}\}}-Y_{\tau_{2n}}\mathds{1}_{\{\tau_{2n}<\tau_{2n-1}\}}.
	$$
One can write $G^n=H^n+\eta^n$, where 
\begin{equation*}
\eta^n:= (X_{\tau^*_1}-Y_{\tau^*_1})\mathds{1}_{\{\tau_{2n}<\tau_{2n-1},\, \tau^*_1=\tau^*_2\}}
\end{equation*}
and	
\begin{equation}\label{11.4.2017.1}
	\begin{split}
	H^n &= -X_{\tau_{2n-1}}\mathds{1}_{\{\tau_{2n}\ge\tau_{2n-1},\, \tau^*_1<\tau^*_2\}}-Y_{\tau_{2n}}\mathds{1}_{\{\tau_{2n}<\tau_{2n-1},\, \tau^*_1<\tau^*_2\}}\\
	 &-X_{\tau_{2n-1}}\mathds{1}_{\{\tau_{2n}\ge\tau_{2n-1},\, \tau^*_1>\tau^*_2\}}-Y_{\tau_{2n}}\mathds{1}_{\{\tau_{2n}<\tau_{2n-1},\, \tau^*_1>\tau^*_2\}}\\
	 & -X_{\tau_{2n-1}}\mathds{1}_{\{\tau^*_1=\tau^*_2\}}
	 + (X_{\tau_{2n-1}}-X_{\tau^*_1}+Y_{\tau^*_2} -Y_{\tau_{2n}} )\mathds{1}_{\{\tau_{2n}<\tau_{2n-1},\, \tau^*_1=\tau^*_2\}}.
\end{split}
\end{equation}
By the condition \cref{eq:XleY},
$\eta^n$ is nonpositive. It is the negative of the hypothetical limiting penalty on the event that for the approximating stopping times, the seller 
stops before the buyer (hence, has to pay the  
penalty), whereas the limiting stopping times coincide (i.e., actually no penalty has to be paid in the limit).
After correcting for this term, which potentially produces a discontinuity, 
it is easy to see from \cref{11.4.2017.1} that by the right-continuity of $X$ and 
$Y$, $H^n = G^n - \eta^n$ converges pointwise to   
\begin{equation*}
H:= -X_{\tau^*_1}\mathds{1}_{\{\tau^*_1\le \tau^*_2\}}-Y_{\tau^*_{2}}\mathds{1}_{\{\tau^*_1> \tau^*_{2}\}}.	
\end{equation*}
For this, note in addition that $\mathds{1}_{\{\tau_{2n}<\tau_{2n-1},\, \tau^*_1>\tau^*_2\}}\rightarrow \mathds{1}_{\{\tau^*_1>\tau^*_2\}}$,
 $\mathds{1}_{\{\tau_{2n}\ge \tau_{2n-1},\, \tau^*_1<\tau^*_2\}}\rightarrow \mathds{1}_{\{\tau^*_1<\tau^*_2\}}$, $\mathds{1}_{\{\tau_{2n}<\tau_{2n-1},\, \tau^*_1<\tau^*_2\}}\rightarrow 0$
 and $\mathds{1}_{\{\tau_{2n}\ge\tau_{2n-1},\, \tau^*_1>\tau^*_2\}}\rightarrow 0$, as $n\rightarrow\infty$.
Then, the continuity of $\pi^A_0(\cdot)$ and the uniform boundedness of $X,Y$ give 
$$
\pi^A_0\big(H^n\big)\longrightarrow 
\pi^A_0\big(H\big) = J_A(\tau^*_1,\tau^*_2)\quad \text{as }n\to \infty. 
$$
This already implies that 
\begin{equation}\label{11.4.2017.2}
\limsup_{n\to\infty}J_A(\tau_{2n-1},\tau_{2n})\le J_A(\tau^*_1,\tau^*_2).
\end{equation}
On the other hand,
\cref{lem:IneqJitauAndtaun} gives 
\begin{equation}\label{eq:ToObtainLimitPricesiszero}
J_A(\tau_{2n-1},\tau)\le J_A(\tau_{2n-1},\tau_{2n})\quad \text{for any } \tau\in\mathcal{T}_0.
\end{equation}
Let $\hat{\tau}$ be defined by  $\hat{\tau}:= \tau^*_2\mathds{1}_{\{\tau^*_2<\tau^*_1\}}+T\mathds{1}_{\{\tau^*_2\ge \tau^*_1\}}$. Then, $\hat{\tau}$ is a stopping time satisfying 
$P[\hat{\tau}=\tau^*_1<T]=0$, and part 2.\ of \cref{lem:1stEqTowardsNEP} implies 
\begin{equation}\label{11.4.2017.3}
\lim_{n\to \infty}J_A(\tau_{2n-1},\hat{\tau})= J_A(\tau^*_1,\hat{\tau}) = J_A(\tau^*_1,\tau^*_2).
\end{equation}
Putting \cref{11.4.2017.2}, \cref{eq:ToObtainLimitPricesiszero}
for $\tau=\hat{\tau}$, and \cref{11.4.2017.3} together yields 
\begin{equation*}
\lim_{n\to\infty} J_A(\tau_{2n-1},\tau_{2n}) = J_A(\tau^*_1,\tau^*_2).	
\end{equation*}
We now turn to the proof of part.\ 1 of the lemma. First note that it is already 
shown that
$$
\lim_{n\to \infty}\pi^A_0(H^n+\eta^n)= \pi^A_0(H) = \lim_{n\to \infty}\pi^A_0(H^n).
$$ 
Hence, \cref{pro:ConvOfExpIMpliesConvInProb} implies that
\begin{equation}\label{eq:H4nto0}
\eta^n \longrightarrow 0\quad \text{ in probability as } n\to \infty.	
\end{equation}
This means that if the hypothetical limiting penalty does not vanish, then 
the probability that the seller stops shortly before the buyer tends to zero.  
The intuition behind this is that on the event that the limiting stopping times coincide,
such an action by the seller cannot be a best response since the buyer stops shortly afterwards without receiving the penalty. 
Part 1.\  of \cref{lem:PropSTimes} yields 
$\{\tau_{2n}<\tau_{2n+1}\}\subset \{\tau_{2n}<\tau_{2n-1}\}$, hence by \cref{eq:H4nto0} this leads to 
\[
\tilde{\eta}^n:= (Y_{\tau^*_1}-X_{\tau^*_1})\mathds{1}_{\{\tau_{2n}<\tau_{2n+1},\, \tau^*_1=\tau^*_2\}}\longrightarrow 0\quad \text{ in probability as } n\to \infty.	
\]
Now, the proof follows as in part 2, with an analogue decomposition of the payoff
$X_{\tau_{2n+1}}\mathds{1}_{\{\tau_{2n}\ge\tau_{2n+1}\}}+Y_{\tau_{2n}}\mathds{1}_{\{\tau_{2n}<\tau_{2n+1}\}}$ and indifference valuation~$\pi^B_0$ instead of $\pi^A_0$.
\end{proof}

\section*{Acknowledgments}

We would like to thank two anonymous referees
for their valuable comments and suggestions from which the manuscript greatly benefited.

\bibliographystyle{plain}
\bibliography{kentia.kuehn.references}
\end{document}